\newcommand{\dd}{\mathrm{d}}
\newcommand{\E}{\mathbb{E}}
\newcommand{\n}[1]{\left\| #1 \right\|}
\DeclareMathOperator{\sgn}{sgn}
\newtheorem{thm}{Theorem}
\newtheorem{lem}[thm]{Lemma}
\newtheorem{prop}[thm]{Proposition}
\newtheorem{cor}[thm]{Corollary}
\theoremstyle{definition}
\newtheorem{rem}{Remark}
\newtheorem{remark}[rem]{Remark}
\newtheorem{dfn}{Definition}
\newcommand{\1}{\textbf{1}}
\newcommand{\wt}[1]{\widetilde{#1}}
\def\Pr{\mathbb P}
\def\er{\mathbb R}
\def\Ex{\mathbb E}
\def\Var{\mathrm{Var}}
\def\ve{\varepsilon}
\title{A note on suprema of canonical processes based on random variables with regular moments}
\author{Rafa{\l} Lata{\l}a\thanks{Research supported by the NCN grant DEC-2012/05/B/ST1/00412} and Tomasz Tkocz}
\date{}
\begin{document}

\maketitle

\begin{abstract}
We derive two-sided bounds for expected values of suprema of canonical processes based on random variables
with moments growing regularly. We also discuss a Sudakov-type minoration principle for canonical
processes.
\end{abstract}

\section{Introduction and Main Results}

In many problems arising in probability theory and its applications one needs to estimate the supremum of a stochastic
process. In particular it is very useful to be able to find two-sided bounds for the mean of the
supremum. The modern approach to this challenge is based on the chaining methods, see monograph \cite{Talnew}.

In this note we study the class of so-called \emph{canonical processes} of the form
$X_t=\sum_{i=1}^{\infty}t_iX_i$, where $X_i$ are independent random variables. If $X_i$ are \emph{standardized}, 
i.e. have mean zero and variance one, then the above series converges a.s.\  for $t\in \ell^2$ and we may try 
to estimate $\Ex\sup_{t\in T}X_t$ for $T\subset \ell^2$. 
To avoid measurability questions we either assume that the index set $T$ is countable or define
in a general situation
\[
\Ex\sup_{t\in T}X_t:=\sup\left\{\Ex\sup_{t\in F}X_t\colon\ F\subset T \mbox{ finite }\right\}.
\]
It is also more convenient to work with the quantity  $\Ex\sup_{s,t\in T}(X_t-X_s)$ rather than $\Ex\sup_{t\in T}X_t$.
Observe however that if the set $T$ or the variables $X_i$ are symmetric then
\[
\Ex\sup_{s,t\in T}(X_s-X_t)=\Ex\sup_{s\in T}X_s+\Ex\sup_{t\in T}(-X_t)=2\Ex\sup_{t\in T}X_t.
\]

In the case when $X_i$ are i.i.d.\ ${\mathcal N}(0,1)$ r.v.s, $X_t$ is the canonical Gaussian process. Moreover,  any 
centered separable Gaussian process has the Karhunen-Lo\`eve representation of such form (see e.g. Corollary 5.3.4 in \cite{Marcus}). In the Gaussian case 
the behaviour of the supremum
of the process is related to the geometry of the metric space $(T,d_2)$, where $d_2$ is the $\ell^2$-metric
$d(s,t)=(\Ex|X_s-X_t|^2)^{1/2}$. The celebrated Fernique-Talagrand majorizing measure bound 
(cf. \cite{Fe,Ta_reg}) may be expressed in the form
\[
\frac{1}{C}\gamma_2(T)\leq \Ex\sup_{t\in T}X_t\leq C\gamma_2(T),
\]
where here and in the sequel $C$ denotes a universal constant,
\[
\gamma_2(T):=\inf\sup_{t\in T}\sum_{n=0}^{\infty}2^{n/2}\Delta_2(A_n(t)),
\]
the infimum runs over all admissible sequences of partitions $({\mathcal A}_n)_{n\geq 0}$ of the
set $T$, $A_n(t)$ is the unique set in ${\mathcal A}_n$ which contains $t$, and $\Delta_2$ denotes the $\ell^2$-diameter.
An increasing sequence of partitions $({\mathcal A}_n)_{n\geq 0}$ of $T$ is called \emph{admissible} if ${\cal A}_0=\{T\}$ 
and $|{\cal A}_n|\leq N_n:=2^{2^n}$ for $n\geq 1$.

In \cite{Ta_can} Talagrand derived two-sided bounds for suprema of the canonical processes based on i.i.d.\ symmetric
r.v.s $X_i$ such that $\Pr(|X_i|> t)=\exp(-|t|^p)$, $1\leq p<\infty$. This result was later extended in \cite{La_sup}
to the case of variables with (not too rapidly decreasing) log-concave tails, i.e.\ to the case when  
$X_i$ are symmetric, independent, $\Pr(|X_i|\geq t)=\exp(-N_i(t))$, $N_i\colon [0,\infty)\rightarrow [0,\infty)$
are convex and $N_i(2t)\leq \gamma N_i(t)$ for $t>0$ and some constant $\gamma$. 
The aim of this note is to find two-sided bounds for suprema
for a more general class of canonical processes. 

For a general process $(X_t)_{t\in T}$ one needs to study a family of metrics instead of a single one. We
define
\[
d_p(s,t):=\|X_s-X_t\|_p,\quad p\geq 1,\ s,t\in T,
\]
where for a real random variable $Y$ and $p\geq 1$, $\|Y\|_p:=(\Ex|Y|^p)^{1/p}$ denotes the $L^p$-norm of $Y$.
Following ideas of Talagrand, we define the functional
\[
\gamma_X(T):=\inf\sup_{t\in T}\sum_{n=0}^{\infty}\Delta_{2^n}(A_n(t)),
\]
where $\Delta_{p}$ denotes the diameter with respect to the  distance $d_p$ and as in the case of the $\gamma_2$-functional
the infimum runs over all admissible sequences of partitions $({\cal A}_n)$ of the set $T$.

It is not hard to show (as it was noted independently by Mendelson and the first named author, c.f.
\cite[Exercise 2.2.25]{Talnew}) that for \emph{any} 
process $(X_t)_{t\in T}$,
\begin{equation}
\label{eq:upper}
\Ex\sup_{s,t\in T} (X_s-X_t)\leq C\gamma_X(T).
\end{equation}

To reverse bound \eqref{eq:upper} we need some regularity assumptions. We express them for canonical processes
in terms of moments growth of variables $X_i$. It is easy to check that
for a symmetric variable $Y$ with a log-concave tail $\exp(-N(t))$, $\|Y\|_p\leq C\frac{p}{q}\|Y\|_q$ for $p\geq q\geq 2$. 
Moreover, the additional condition 
$N(2t)\leq \gamma N(t)$ yields $\|Y\|_{\beta p}\geq 2 \|Y\|_p$ for $p\geq 2$ and a constant $\beta$ which depends only
on $\gamma$.
This motivates the following definitions.

\begin{dfn}
For $\alpha\geq 1$ we say that moments of a random variable $X$ \emph{grow $\alpha$-regularly} if
\[
\|X\|_p\leq \alpha\frac{p}{q}\|X\|_q\quad \mbox{ for }p\geq q\geq 2.
\]
\end{dfn}

\begin{dfn}
For $\beta<\infty$ we say that moments of a random variable $X$ \emph{grow with speed $\beta$} if
\[
\|X\|_{\beta p}\geq 2\|X\|_p \quad \mbox{ for }p\geq 2.
\]
\end{dfn}

The class of all standardized random variables with the $\alpha$-regular growth of moments
will be denoted by ${\mathcal R}_{\alpha}$ and with moments growing with speed $\beta$ by
${\mathcal S}_{\beta}$.

\begin{thm}
\label{thm:supcan}
Let $X_t=\sum_{i=1}^{\infty}t_iX_i$, $t\in \ell^2$ be the canonical process based on independent standardized
r.v.s $X_i$ with  moments growing $\alpha$-regularly with
speed  $\beta$ for some $\alpha\geq 1$ and $\beta>1$. Then for any $T\subset \ell^2$,
\[
\frac{1}{C(\alpha,\beta)}\gamma_X(T)\leq \Ex\sup_{s,t\in T}(X_s-X_t)\leq C\gamma_X(T).
\]
\end{thm}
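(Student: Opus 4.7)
The upper bound is supplied by \eqref{eq:upper} (valid for any process), so the task is to prove the matching lower bound $\gamma_X(T)\leq C(\alpha,\beta)\Ex\sup_{s,t\in T}(X_s-X_t)$. The strategy I would follow is the standard two-step program of Talagrand's generic chaining: first establish a Sudakov-type minoration for the canonical process $(X_t)$, then feed it into the abstract partitioning scheme from \cite{Talnew}. Step two is essentially formal once step one is in place: applying the growth-condition framework with the functional $F(A):=\Ex\sup_{s,t\in A}(X_s-X_t)$ and the family of metrics $d_{2^n}$ produces an admissible partition $({\cal A}_n)$ with $\sum_{n\geq 0}\Delta_{2^n}(A_n(t))\leq C(\alpha,\beta)F(T)$ uniformly in $t\in T$, which is precisely the desired inequality.

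The heart of the proof is therefore the Sudakov-type minoration, which should take the form: there exists $c=c(\alpha,\beta)>0$ such that for any $N\geq 2$, setting $p=\log N$, and for any $t_1,\ldots,t_N\in\ell^2$ with $d_p(t_i,t_j)\geq a$ for $i\neq j$, one has $\Ex\max_{i\leq N}X_{t_i}\geq c a$. The idea is to convert the $L^p$-separation into a genuine lower tail bound: the speed condition $\|X\|_{\beta p}\geq 2\|X\|_p$ prevents the random variables $X_{t_i}-X_{t_j}$ from being concentrated on too narrow a scale, so a Paley--Zygmund-type argument (using $\alpha$-regularity to dominate higher moments) should give $\Pr(|X_{t_i}-X_{t_j}|\geq c_1 a)\geq e^{-c_2 p}=N^{-c_2}$. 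A union-bound or Bernoulli-selection argument exploiting the independence of the coordinates $X_i$ then lifts this pointwise estimate to the expected maximum.

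The main obstacle is precisely this Sudakov-type minoration. Unlike in the Gaussian case, where rotational invariance delivers the estimate almost for free, and unlike the log-concave-tails setting of \cite{La_sup}, where one has explicit tail representations $\Pr(|X_i|\geq t)=\exp(-N_i(t))$ to work with, here the only tools available are the algebraic moment conditions ${\mathcal R}_\alpha$ and ${\mathcal S}_\beta$. The delicate point is that the required lower-tail estimate is strictly stronger than $L^p$-separation and demands simultaneous use of both the upper moment bound (via $\alpha$-regularity) and the non-degenerate moment growth (via the speed $\beta>1$). Tracking the dependence of constants on $(\alpha,\beta)$ throughout, and handling potential overlap or cancellation between the vectors $t_i$ (so that the independence of $(X_i)$ is truly harnessed rather than destroyed), will be the most technical parts of the argument.
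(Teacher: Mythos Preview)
Your template is right in outline, but you have misplaced both the main difficulty and the role of the speed parameter $\beta$. The Sudakov minoration (Theorem~\ref{thm:sud}) in fact requires only $\alpha$-regularity; the paper proves it not by a direct Paley--Zygmund argument but by showing (Proposition~\ref{prop:betweenconvex}) that each tail function $N_i(t)=-\ln\Pr(|X_i|>t)$ is sandwiched between $M_i(t)$ and $M_i(L_\alpha t)$ for a convex $M_i$, coupling to symmetric variables $Y_i$ with log-concave tails $e^{-M_i}$, and invoking the known minoration of \cite{La_sup}. Your ``union-bound or Bernoulli-selection argument exploiting independence'' is exactly the step that does \emph{not} go through directly: the increments $X_{t_i}-X_{t_j}$ share coordinates and are heavily dependent, so a pointwise tail bound of order $N^{-c_2}$ does not by itself produce a lower bound on the expected maximum.

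The more serious gap is the claim that the partitioning is ``essentially formal once step one is in place''. Talagrand's scheme requires not Sudakov minoration for points but a growth condition for \emph{sets}: if $H_1,\ldots,H_m$ (with $m=N_{n+1}$) are $a$-separated in $d_{2^{n+1}}$, then $F\bigl(\bigcup_\ell H_\ell\bigr)\geq ca+\min_\ell F(H_\ell)$. This needs a concentration ingredient to decouple the choice among the $H_\ell$ from the supremum inside each $H_\ell$, and it is precisely here---not in the Sudakov step---that the $\beta$-speed hypothesis enters. The paper does not verify this growth condition for $X$ directly. Instead it uses Proposition~\ref{prop:taillowgrowth} (which genuinely combines $\alpha$-regularity with $\beta$-speed) to obtain $N_i(rt)\leq C(\alpha,\beta,r)N_i(t)$ for $t\geq 2$, so that the convex comparison functions $M_i$ can be modified to $\wt M_i$ with moderate growth $\wt M_i(2t)\leq\tilde\gamma\,\wt M_i(t)$ for all $t\geq 0$. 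One then builds an auxiliary process $Z_t=\sum_i t_iZ_i$ with log-concave tails $e^{-\wt M_i}$, applies Theorem~\ref{thm:suplct} as a black box to get $\gamma_Z(T)\leq C(\alpha,\beta)\,\Ex\sup_{t\in T} Z_t$, and finishes by comparing $\gamma_Z$ to $\gamma_X$ (via moment estimates and the contraction principle) and $\Ex\sup Z_t$ to $\Ex\sup X_t$ (again via contraction). Without $\beta$-speed the moderate-growth step fails---the Bernoulli example in the paper shows that $\alpha$-regularity alone cannot give the lower bound---so relegating $\beta$ to the Sudakov step and treating the partitioning as routine will not close the argument.
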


Here and in the sequel $C(\alpha,\beta)$ denotes a constant which depends only on $\alpha$ and $\beta$ 
(which may differ at each occurrence). 
The above result easily yields the following comparison result for suprema of processes.

\begin{cor}\label{cor:compmom}
Let $X_t$ be as in Theorem \ref{thm:supcan}. Then for any nonempty  $T\subset \ell^2$ and any process $(Y_t)_{t\in T}$
such that $\|Y_s-Y_t\|_p\leq \|X_s-X_t\|_p$ for $p\geq 1$ and $s,t\in T$ we have
\[
\Ex\sup_{s,t\in T}(Y_s-Y_t)\leq C(\alpha,\beta)\Ex\sup_{s,t\in T}(X_s-X_t).
\]
\end{cor}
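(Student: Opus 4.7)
The corollary will fall out by concatenating three inequalities, two of which are already at our disposal and the third of which is a trivial monotonicity observation about the functional $\gamma_X$. No new probabilistic work is needed.

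First, I would invoke the universal upper bound \eqref{eq:upper}, which makes no assumption on the structure of the process: applied to $(Y_t)_{t\in T}$ it yields
\[
\Ex\sup_{s,t\in T}(Y_s-Y_t)\le C\gamma_Y(T).
\]
Here $\gamma_Y(T)$ is the functional built from the family of metrics $d_p^Y(s,t)=\|Y_s-Y_t\|_p$ in exactly the same way as $\gamma_X(T)$ was defined for $X$.

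Next, the comparison-of-moments hypothesis $\|Y_s-Y_t\|_p\le\|X_s-X_t\|_p$ for every $p\ge 1$ and every pair $s,t\in T$ immediately gives $d_p^Y\le d_p^X$ pointwise, and hence for every subset $A\subset T$ and every $p\ge 1$ the diameters satisfy $\Delta_p^Y(A)\le \Delta_p^X(A)$. Consequently, for any admissible sequence $({\mathcal A}_n)$,
\[
\sup_{t\in T}\sum_{n=0}^{\infty}\Delta^Y_{2^n}(A_n(t))\le \sup_{t\in T}\sum_{n=0}^{\infty}\Delta^X_{2^n}(A_n(t)),
\]
and taking the infimum on the right over admissible $({\mathcal A}_n)$ yields $\gamma_Y(T)\le \gamma_X(T)$.

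Finally I would apply the lower bound of Theorem \ref{thm:supcan}, whose hypotheses (standardization, $\alpha$-regular growth, speed $\beta$) are exactly those placed on the $X_i$ in the corollary, to obtain
\[
\gamma_X(T)\le C(\alpha,\beta)\Ex\sup_{s,t\in T}(X_s-X_t).
\]
Chaining the three displays gives
\[
\Ex\sup_{s,t\in T}(Y_s-Y_t)\le C\gamma_Y(T)\le C\gamma_X(T)\le C(\alpha,\beta)\Ex\sup_{s,t\in T}(X_s-X_t),
\]
which is the desired inequality (absorbing the universal $C$ into $C(\alpha,\beta)$). There is no real obstacle: the only thing worth emphasising is that the structural hypothesis (standardization, regular moment growth) is used only on $X$, while for $Y$ we only need the general-process upper bound, so $Y$ is allowed to be any process on $T$ whatsoever.
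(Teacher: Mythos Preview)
Your proof is correct and follows exactly the same approach as the paper: use the general upper bound \eqref{eq:upper} for $Y$, the monotonicity $\gamma_Y(T)\le\gamma_X(T)$ from the moment comparison, and the lower bound of Theorem~\ref{thm:supcan} for $X$. The paper compresses this into one sentence, but the logic is identical.
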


\begin{proof}
The assumption implies $\gamma_Y(T)\leq \gamma_X(T)$ and the result immediately follows by the lower bound in Theorem
\ref{thm:supcan} and estimate \eqref{eq:upper} used for the process $Y$.
\end{proof}

In fact one may show a stronger result.
\begin{cor}
\label{cor:comptails}
Let $X_t$ and $Y_t$ be as in Corollary \ref{cor:compmom}. Then for $u\geq 0$,
\[
\Pr\left(\sup_{s,t\in T}(Y_s-Y_t)\geq u\right)\leq 
C(\alpha,\beta)\Pr\left(\sup_{s,t\in T}(X_s-X_t)\geq \frac{1}{C(\alpha,\beta)}u\right).
\]
\end{cor}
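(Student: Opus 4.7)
To prove Corollary~\ref{cor:comptails} I would bootstrap the mean comparison of Corollary~\ref{cor:compmom} to the level of tails, using moment-regularity properties of the supremum $M_X:=\sup_{s,t\in T}(X_s-X_t)$ of the canonical $X$-process.

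First I would upgrade the mean comparison to an $L^p$-version,
\[
\|M_Y\|_p\le C(\alpha,\beta)\|M_X\|_p\qquad\text{for all } p\ge 1.
\]
This should follow by running the chaining arguments behind Theorem~\ref{thm:supcan} and estimate~\eqref{eq:upper} at the $L^p$-level rather than the $L^1$-level, exploiting the hypothesis $\|Y_s-Y_t\|_p\le\|X_s-X_t\|_p$ together with the $\alpha$-regular moment growth of each $X_i$, which controls how the $L^p$-norm of a canonical increment dominates its $L^{2^n}$-norm at successive chaining levels. In parallel I would establish a moment-regularity statement for $M_X$ itself, of the form
\[
\|M_X\|_{2p}\le C(\alpha,\beta)\|M_X\|_p,\qquad p\ge 1,
\]
by the same chaining argument applied to $X$ alone.

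Next I would convert these moment bounds into tails. Markov's inequality combined with the $L^p$-comparison gives
\[
\Pr(M_Y\ge u)\le\bigl(C(\alpha,\beta)\|M_X\|_p/u\bigr)^p,
\]
while the Paley--Zygmund inequality applied to $M_X^p$, together with $\|M_X\|_{2p}\le C\|M_X\|_p$, yields the matching lower bound
\[
\Pr\bigl(M_X\ge c(\alpha,\beta)\|M_X\|_p\bigr)\ge c(\alpha,\beta)^p.
\]
For each $u$ I would pick $p=p(u)$ so that $\|M_X\|_p\asymp u$; the two inequalities then combine to produce the desired tail comparison. The small-$u$ regime $u\le C(\alpha,\beta)\E M_X$ would be handled separately by the standard Paley--Zygmund inequality applied at $p=2$, which shows that $\Pr(M_X\ge u/C(\alpha,\beta))$ is bounded below by a positive constant, making the stated inequality trivial in that range.

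The main obstacle will be the $L^p$-upgrade. The chaining proofs of Theorem~\ref{thm:supcan} and of \eqref{eq:upper} are tailored to bound $\E M_X$ through the functional $\gamma_X(T)$, and promoting them to $L^p$-bounds for all $p\ge 1$ requires carefully tracking, at each level $n$ of the admissible sequence, the relation between $L^p$- and $L^{2^n}$-norms of the increments of $X$. The $\alpha$-regular growth is precisely the hypothesis that makes this feasible, since it controls $\|X_s-X_t\|_p$ in terms of $\|X_s-X_t\|_{2^n}$ up to a factor polynomial in $p/2^n$, which can be absorbed into the $C(\alpha,\beta)$ constants.
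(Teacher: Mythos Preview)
Your approach is essentially the paper's: establish the $L^p$-comparison $\|M_Y\|_p\le C(\alpha,\beta)\|M_X\|_p$ and the moment regularity $\|M_X\|_{2p}\le C(\alpha)\|M_X\|_p$, then combine Chebyshev's inequality with Paley--Zygmund and choose $p$ appropriately. The paper obtains these two ingredients by invoking a ready-made $L^p$-chaining bound (Proposition~20 of \cite{La_sud}, which gives $\|M_Y\|_p\le C(\gamma_Y(T)+\sup_{s,t}\|Y_s-Y_t\|_p)$ for \emph{any} process) together with Theorem~\ref{thm:supcan}, Theorem~\ref{thm:weakstrong} and Lemma~\ref{lem:convreg}, rather than re-running the chaining from scratch; note in particular that the $L^p$-upgrade of \eqref{eq:upper} needs no regularity hypothesis---the $(\alpha,\beta)$-assumptions enter only when bounding $\gamma_X(T)$ and $\sup_{s,t}\|X_s-X_t\|_p$ back by $\|M_X\|_p$.
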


Another consequence of Theorem \ref{thm:supcan} is the following striking bound for suprema of some canonical processes.

\begin{cor}
\label{cor:convhull}
Let $X_t$ be as in Theorem \ref{thm:supcan} and $T\subset \ell^2$ be such that $\Ex\sup_{s,t\in T}(X_s-X_t)<\infty$.
Then there exist $t^1,t^2,\ldots\in \ell^2$ such that $T-T\subset\overline{\mathrm{conv}}\{\pm t^n\colon\ n\geq 1\}$
and $\|X_{t^n}\|_{\log(n+2)}\leq C(\alpha,\beta)\Ex\sup_{s,t\in T}(X_s-X_t)$.
\end{cor}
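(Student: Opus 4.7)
My plan is to combine Theorem~\ref{thm:supcan} with the standard generic chaining decomposition, package the chaining differences into a countable pool, and use $\alpha$-regularity of moments to transfer an $L^{2^n}$ bound into the required $L^{\log(k+2)}$ bound. Applying Theorem~\ref{thm:supcan} gives $\gamma_X(T)\leq C M$ with $M:=\Ex\sup_{s,t\in T}(X_s-X_t)$, and I pick an admissible sequence $(\mathcal{A}_n)_{n\geq 0}$ with $\sup_{t\in T}\sum_{n\geq 0}\Delta_{2^n}(A_n(t))\leq 2\gamma_X(T)$, together with representatives $\pi_n(t)\in A_n(t)$ (so $\pi_0\equiv t_0$, a common point since $\mathcal{A}_0=\{T\}$). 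Because $X_i$ are standardized, $\|X_u\|_{2^n}\geq\|X_u\|_2=\|u\|_2$, and summability of the diameters forces $\pi_n(t)\to t$ in $\ell^2$. Writing $d_n(t):=\pi_{n+1}(t)-\pi_n(t)$ and $c_n(t):=\|X_{d_n(t)}\|_{2^n}$, I get the telescoping identity $s-t=\sum_n (d_n(s)-d_n(t))$ in $\ell^2$, with $c_n(t)\leq\Delta_{2^n}(A_n(t))$ and $\sum_n c_n(t)\leq CM$ uniformly in~$t$.

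Next I build the pool. Let $V_n:=\{d_n(t):t\in T\}$; since $\pi_{n+1}(t)$ determines $\pi_n(t)$ via the partition structure, $|V_n|\leq N_{n+1}=2^{2^{n+1}}$. Fix a constant $K$ of order $\gamma_X(T)$ large enough that the weights below sum to at most $1$, and set
\[
P_n := \{\pm K\,d/\|X_d\|_{2^n} : d\in V_n\setminus\{0\}\},
\]
then enumerate $\bigcup_n P_n$ as $t^1,t^2,\dots$ block by block. An element coming from level $n$ lands at an index $k\leq 4N_{n+1}$, so $\log(k+2)\leq C'\,2^n$. The convex-hull containment follows from
\[
s-t = \sum_n \frac{c_n(s)}{K}\cdot\frac{K\,d_n(s)}{c_n(s)} + \sum_n \frac{c_n(t)}{K}\cdot\Bigl(-\frac{K\,d_n(t)}{c_n(t)}\Bigr)
\]
(omitting null terms): the scaled fractions lie in $\pm P$, and with $K$ chosen so the total weight is $\leq 1$, any deficit is absorbed via $0=\tfrac12 t^1+\tfrac12(-t^1)\in\overline{\mathrm{conv}}\{\pm t^k\}$. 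The $\ell^2$-convergence of the series justifies passing to the closed convex hull.

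For the final moment bound, $\|X_{t^k}\|_{2^n}=K\leq C(\alpha,\beta)M$ whenever $t^k\in P_n$; since $\log(k+2)\leq C'\,2^n$, it remains to pass from $L^{2^n}$ to $L^{\log(k+2)}$ at the cost of a constant depending on $\alpha$. This reduces to the statement that sums $\sum_i u_i X_i$ of independent $\alpha$-regular random variables inherit $C(\alpha)$-regular moments. This moment-transfer step is, in my view, the principal obstacle of the proof; it is a natural companion to the $\alpha$-regularity hypothesis of Theorem~\ref{thm:supcan} and is almost certainly established as an auxiliary lemma elsewhere in the paper, since it underlies the upper estimates in the main theorem as well. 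Granting it, $\|X_{t^k}\|_{\log(k+2)}\leq \|X_{t^k}\|_{C'2^n}\leq C(\alpha)\|X_{t^k}\|_{2^n}\leq C(\alpha,\beta)M$, completing the proof.
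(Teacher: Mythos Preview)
Your proposal is correct and follows essentially the same route as the paper: apply Theorem~\ref{thm:supcan} to obtain a near-optimal admissible sequence, telescope $s-t$ through the chaining increments $\pi_{n+1}(t)-\pi_n(t)$, normalize these increments and enumerate them block by block, and use the convex-combination/weight argument for the hull containment. The moment-transfer step you flag as the ``principal obstacle'' is exactly Lemma~\ref{lem:convreg} in the paper (sums of independent $\alpha$-regular variables have $C\alpha$-regular moments); the paper invokes it at a slightly different spot---to compare $\Delta_{2^{n+1}}(A_{n-1}(t))$ with $\Delta_{2^{n-1}}(A_{n-1}(t))$ when bounding the global scaling factor---but this is only an organizational difference.
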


\begin{rem}
The reverse statement easily follows by the union bound and Chebyshev's inequality. Namely, for any canonical process
$(X_t)_{t\in \ell^2}$ and any nonempty set $T\subset \ell^2$ such that  
$T-T\subset\overline{\mathrm{conv}}\{\pm t^n\colon\ n\geq 1\}$
and $\|X_{t^n}\|_{\log(n+2)}\leq M$ one has $\Ex\sup_{s,t\in T}(X_s-X_t)\leq CM$. For details see the argument after
Corollary 1.2 in \cite{BL}.
\end{rem}

\begin{rem}
Let $(\ve_i)_{i\geq 1}$ be i.i.d.\ symmetric $\pm 1$-valued r.v.s, $X_t=\sum_{i=1}^\infty t_i\ve_i$, $t\in \ell^2$ and
$T=\{e_n\colon\ n\geq 1\}$, where $(e_n)$ is the canonical basis of $\ell^2$. 
Then obviously $\Ex\sup_{s,t\in T}(X_s-X_t)=2$, moreover for any $A\subset T$
with cardinality at least 2, we have $\Delta_{2^k}(T)\geq \Delta_2(T)=\sqrt{2}$, hence $\gamma_X(T)=\infty$. Therefore
one cannot reverse bound \eqref{eq:upper} for Bernoulli processes, so some assumptions on the nontrivial speed of 
growth of moments are necessary in Theorem \ref{thm:supcan}. However, Corollary \ref{cor:convhull} holds for Bernoulli 
processes and we believe that in that statement the assumption of the $\beta$-speed of the moments growth is not needed.
\end{rem}

The crucial step in deriving lower bounds for suprema of stochastic processes is the Sudakov-minoration principle.
Following \cite{La_sud} (see also \cite{MMP}) we say that a process $(X_t)_{t\in S}$ 
satisfies \emph{the Sudakov minoration principle with constant} $\kappa>0$ if for any $p\geq 1$, 
$T\subset S$ with $|T|\geq e^p$ such that $\|X_s-X_t\|_p\geq u$ for all $s,t\in T$, $s\neq t$, 
we have $\Ex\sup_{s,t\in T}(X_s-X_t)\geq \kappa u$.

\begin{thm}
\label{thm:sud}
Suppose that $X_1,X_2,\ldots$ are independent standardized r.v.s  and moments of $X_i$ grow $\alpha$-regularly 
for some $\alpha\geq 1$. Then the canonical process 
$X_t=\sum_{i=1}^{\infty}t_iX_i$, $t\in \ell^2$ satisfies
the Sudakov minoration principle with  constant $\kappa(\alpha)$, which depends only on $\alpha$. 
\end{thm}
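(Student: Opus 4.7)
My strategy follows the standard approach to Sudakov-type minoration principles: symmetrize, establish a quantitative Paley--Zygmund tail bound for the increments of the process leveraging $\alpha$-regularity, and use the hypothesis $|T|\geq e^p$ to turn the pointwise tail bound into a lower bound on $\E\sup$. The key structural point is that $\alpha$-regularity of moments is inherited by arbitrary linear combinations of the $X_i$.

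I would start with symmetrization, replacing each $X_i$ by $\widetilde X_i := (X_i - X_i')/\sqrt 2$, where $X_i'$ is an independent copy. The $\widetilde X_i$ are standardized, symmetric, and have $(2\alpha)$-regular moments; by Jensen, $\|\widetilde X_s - \widetilde X_t\|_p \geq \|X_s - X_t\|_p/\sqrt 2$, and by the triangle inequality $\E\sup_{s,t}(X_s - X_t) \geq \E\sup_{s,t}(\widetilde X_s - \widetilde X_t)/\sqrt 2$, so at the cost of absolute constants we may assume the $X_i$ symmetric.

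The heart of the proof is the structural lemma that $\alpha$-regularity is inherited by sums: for any $a \in \ell^2$ and $p\geq q\geq 2$,
\[
\left\|\sum_i a_i X_i\right\|_p \leq \kappa(\alpha)\,\frac{p}{q}\,\left\|\sum_i a_i X_i\right\|_q.
\]
This should follow from a Rosenthal / Hitczenko-type decomposition of $\|Y_a\|_p$ into a Gaussian part $\sqrt p\,\|a\|_2$ and a heavy-tail part governed by the order statistics of $(a_i X_i)_i$, using the bound $\|X_i\|_p \leq \alpha p/2$ forced by $\alpha$-regularity. Applied to $Y_{s,t} := X_s - X_t$, this yields a Paley--Zygmund estimate $\Pr(|Y_{s,t}| \geq \tfrac12 \|Y_{s,t}\|_p) \geq c_0(\alpha)\,e^{-C_0(\alpha) p}$; after the standard rescaling $p\mapsto p/C_0(\alpha)$ (with the separation transferred through the regularity lemma so that $\|Y_{s,t}\|_{p/C_0(\alpha)} \geq u/C_1(\alpha)$) together with symmetry, this becomes
\[
\Pr\bigl(X_s - X_t \geq c'(\alpha)\,u\bigr) \geq \tfrac12\, e^{-p} \quad\text{for all distinct } s,t \in T.
\]

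The final step fixes $t_0\in T$, sets $N := |\{s\in T\setminus\{t_0\}: X_s - X_{t_0} \geq c'(\alpha)\,u\}|$, and uses linearity together with $|T|\geq e^p$ to obtain $\E N \geq c_2(\alpha) > 0$; since $\{N\geq 1\}\subset\{\sup_{s,t\in T}(X_s - X_t)\geq c'(\alpha)\,u\}$, it remains to lower-bound $\Pr(N\geq 1)$ by a constant depending only on $\alpha$. \textbf{This is the hardest step}: the natural route via the second-moment inequality $\Pr(N\geq 1)\geq (\E N)^2/\E N^2$ requires the matching upper bound $\E N^2 \leq C(\alpha)(\E N)^2$, but the variables $X_s - X_{t_0}$ and $X_{s'} - X_{t_0}$ share the common term $X_{t_0}$ and may be strongly positively correlated. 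A successful proof must either control the pairwise probabilities $\Pr(A_s\cap A_{s'})$ by applying the same Paley--Zygmund machinery to the bivariate canonical vectors $(Y_{s,t_0}, Y_{s',t_0})$ (whose one-dimensional linear marginals inherit $\kappa(\alpha)$-regularity), or first extract a combinatorial subset of $T$ on which the relevant coordinates of $t - t_0$ are almost disjointly supported, making the events $A_s$ nearly independent.
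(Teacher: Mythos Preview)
Your approach diverges from the paper's and, as you yourself flag, has a genuine gap at the final step. The counting argument gives $\E N \geq c(\alpha)$, but converting this into $\Pr(N\geq 1)\geq c(\alpha)$ is where the real difficulty of Sudakov minoration lives, and neither of your two proposed routes closes it. Route~1 is miscast: the Paley--Zygmund inequality produces \emph{lower} bounds on tail probabilities, whereas controlling $\E N^2=\sum_{s,s'}\Pr(A_s\cap A_{s'})$ requires \emph{upper} bounds on the pairwise probabilities; regularity of one-dimensional marginals of $(Y_{s,t_0},Y_{s',t_0})$ gives you nothing in that direction. And the correlation can genuinely be bad: if many of the vectors $s-t_0$ share a dominant coordinate, the events $A_s$ are nearly the same event and $(\E N)^2/\E N^2$ can be exponentially small in $p$. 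Route~2 (extracting a sub-family with almost disjoint supports) is a real technique, but carrying it out is essentially the substance of Lata{\l}a's proof of Sudakov minoration for log-concave tails; you give no indication of how $\alpha$-regularity alone would drive such an extraction, and in simple examples (e.g.\ $T\subset\mathbb{R} e_1$) no disjoint-support subset exists at all, so the argument would have to be considerably more delicate than stated.

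The paper takes an entirely different route: rather than argue directly, it reduces to the log-concave case and invokes it as a black box. The key structural input is not that regularity passes to sums (your lemma, which is the paper's Lemma~\ref{lem:convreg}, used elsewhere) but rather a \emph{pointwise} tail comparison: using Paley--Zygmund and Chebyshev on each $X_i$ separately, one shows that $N_i(t):=-\ln\Pr(|X_i|>t)$ satisfies $N_i(\kappa_\alpha\lambda t)\geq \lambda N_i(t)$ for $\lambda\geq 1$, which via an elementary convexification lemma yields a convex $M_i$ with $M_i(t)\leq N_i(t)\leq M_i(L_\alpha t)$ for $t\geq T_\alpha$. One then builds symmetric log-concave-tailed variables $Y_i$ with $|X_i|\leq |Y_i|\leq L_\alpha|\widetilde X_i|$ (where $\widetilde X_i$ is a mild truncation of $X_i$), applies the known Sudakov minoration for log-concave tails to the $Y$-process, and transfers the conclusion back to $X$ via the contraction principle. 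The correlation obstacle you hit is thus absorbed into the cited log-concave result rather than confronted directly.
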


In fact the assumption on regular growth of moments is necessary for the Sudakov minoration principle
in the i.i.d.\ case.

\begin{prop}
\label{prop:sud_yield_reg}
Suppose that a canonical process  $X_t=\sum_{i=1}^{\infty}t_iX_i$, $t\in  \ell^2$ based on i.i.d.\  standardized
random variables $X_i$  satisfies the Sudakov minoration with  constant $\kappa>0$.
Then  moments of $X_i$ grow $C/\kappa$-regularly. 
\end{prop}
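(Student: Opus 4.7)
Write $a_r := \|X_1\|_r$. The plan is to find, for each pair $p\geq q\geq 2$, a single finite test set $T\subset\ell^2$ on which Sudakov minoration at level $p$ directly forces $a_p \leq (C/\kappa)(p/q)\,a_q$. The naive choice $T = \{e_1,\ldots,e_{\lceil e^p\rceil}\}$ is not enough: pairwise $L^p$-distances are at least $a_p$ (Jensen on independence) and $|T|\geq e^p$, so Sudakov does yield $\Ex\sup(X_s-X_t)\geq\kappa a_p$, but the only general upper bound on $\Ex\max_{i\leq e^p}|X_i|$ available without a priori regularity is $(e^p a_q^q)^{1/q}=e^{p/q}a_q$, which is exponential in $p/q$. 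I will instead use a product-type set whose expected supremum factorizes into $L\asymp p/q$ independent maxima of size $M\asymp e^q$.

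Concretely, set $M:=\lceil e^q\rceil$, $L:=\lceil p/q\rceil$, partition $\mathbb{N}$ into $L$ consecutive blocks of $M$ indices and, for every $\mathbf{i}=(i_1,\ldots,i_L)\in[M]^L$, define
\[
v_{\mathbf{i}} := \sum_{l=1}^L e_{(l-1)M+i_l},\qquad T := \{v_{\mathbf{i}}:\mathbf{i}\in [M]^L\}.
\]
Then $|T|=M^L\geq e^{qL}\geq e^p$. For $\mathbf{i}\neq\mathbf{i}'$, $X_{v_{\mathbf{i}}}-X_{v_{\mathbf{i}'}}=\sum_l Y_l$ with independent mean-zero summands $Y_l=X_{(l-1)M+i_l}-X_{(l-1)M+i_l'}$ (and $Y_l=0$ when $i_l=i_l'$). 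Picking any $l_0$ with $i_{l_0}\neq i_{l_0}'$ and applying Jensen twice---first to integrate out the blocks $l\neq l_0$, then to integrate out one of the two variables in $Y_{l_0}$---gives $\|X_{v_{\mathbf{i}}}-X_{v_{\mathbf{i}'}}\|_p\geq a_p$. Sudakov minoration at level $p$ therefore yields $\Ex\sup_{s,t\in T}(X_s-X_t)\geq\kappa a_p$.

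The point of this $T$ is that since each $i_l$ enters a single term of $X_{v_{\mathbf{i}}}=\sum_l X_{(l-1)M+i_l}$, one has the pointwise identity $\max_{\mathbf{i}}X_{v_{\mathbf{i}}}=\sum_{l=1}^L\max_{i_l\in[M]}X_{(l-1)M+i_l}$, and analogously for the minimum. Hence
\[
\Ex\sup_{s,t\in T}(X_s-X_t) = L\cdot\Ex\sup_{i,j\leq M}(X_i-X_j) \leq 2L\cdot\Ex\max_{i\leq M}|X_i| \leq 2L\cdot M^{1/q}a_q \leq 4eL\cdot a_q,
\]
where the last step uses $M\leq 2e^q$ (valid since $q\geq 1$) together with the routine moment bound $\Ex\max_{i\leq M}|X_i|\leq(M a_q^q)^{1/q}$. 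Combined with the Sudakov lower bound this gives $\kappa a_p\leq 4eL a_q\leq 8e(p/q)a_q$, i.e.\ moments grow $(8e/\kappa)$-regularly, which is the proposition with $C=8e$.

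The only real difficulty is hitting on this test set; once the product structure is in place the estimates are elementary. What it achieves is to trade the uncontrolled $e^{p/q}$ growth of the max of $e^p$ i.i.d.\ copies of $X_1$ for the linear factor $L=\lceil p/q\rceil$, by reducing to $L$ independent maxima of size $M\asymp e^q$, each tamed by $a_q$ through Jensen.
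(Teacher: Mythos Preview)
Your proof is correct. The overall strategy matches the paper's---both place a test set of $0$--$1$ vectors with $m\approx p/q$ ones spread among $n\approx (p/q)e^{q}$ coordinates, lower-bound pairwise $L^p$-distances by $a_p$ via Jensen, and apply Sudakov minoration---but the upper bound on the supremum is handled differently. The paper takes \emph{all} $\binom{n}{m}$ such vectors and controls $\Ex\sup_{s,t}(X_s-X_t)\leq 2\Ex\sum_{k\leq m}X_k^*$ through order statistics, bounding each $\Ex X_k^*$ by $C(n/k)^{1/q}a_q$ via a Markov tail estimate and integration by parts. You instead restrict to vectors with exactly one $1$ in each of $L$ consecutive blocks of size $M\approx e^q$; this product structure makes $\max_{\mathbf{i}}X_{v_{\mathbf{i}}}$ a genuine sum of $L$ independent block maxima, so the supremum bound reduces to the single elementary estimate $\Ex\max_{i\leq M}|X_i|\leq M^{1/q}a_q$. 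Your route is shorter and sidesteps the order-statistics bookkeeping entirely; the paper's argument is marginally more robust (it does not need the block decomposition) but pays for that with an extra summation step. The resulting constants are comparable.
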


The next simple observation shows that (under mild regularity assumptions) the Sudakov minoration is necessary
for reversing bound \eqref{eq:upper}.

\begin{rem}
Suppose that for any finite $T\subset \ell^2$ we have $\Ex\sup_{s,t\in T}(X_s-X_t)\geq \kappa\gamma_X(T)$.
Assume moreover that for any $p\geq 1$ and $t\in \ell^2$, $\|X_t\|_{2p}\leq \gamma\|X_t\|_p$.
Then $X$ satisfies the Sudakov minoration principle with constant $\kappa/\gamma$.
\end{rem}

\begin{proof} Let $p\geq 1$ and $T\subset \ell^2$ of cardinality at least $e^p$ be such that 
$\|X_s-X_t\|_p\geq u$ for any $s,t\in T$, $s\neq t$.
Let $2^k\leq p< 2^{k+1}$ and $({\cal A}_n)$ be an admissible sequence of partitions of the set $T$.
Then there is $A\in {\cal A}_k$ which contains at least two points of $T$.
Hence
\[
\Ex\sup_{s,t\in T}(X_s-X_t)\geq \kappa\gamma_X(T)\geq \kappa\Delta_{2^k}(A)\geq \kappa\Delta_{\max\{p/2,1\}}(A)\geq 
\kappa u/\gamma.
\]
\end{proof}

In fact in the i.i.d.\ case we do not need the regularity assumption $\|X_t\|_{2p}\leq \gamma\|X_t\|_p$.

\begin{prop}
\label{prop:suptosud}
Let $X_t=\sum_{i=1}^{\infty}t_iX_i$, $t\in \ell^2$, where $X_i$ are i.i.d.\ standardized r.v.s.
Suppose that $\Ex\sup_{t,s\in T}X_t\geq \kappa\gamma_X(T)$ for all finite $T\subset \ell^2$. Then $(X_t)_{t\in \ell^2}$
satisfies  the Sudakov minoration principle with constant $\kappa/2$. In particular, moments of $X_i$
grow $C/\kappa$-regularly.
\end{prop}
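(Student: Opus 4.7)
The plan is to first establish the Sudakov minoration principle with constant $\kappa/2$; the stated moment regularity then follows immediately from Proposition~\ref{prop:sud_yield_reg}, with the factor $2$ absorbed into the universal constant.

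For the Sudakov part, fix $p\geq 1$ and a finite $T\subset\ell^2$ with $|T|\geq e^p$ and $\|X_s-X_t\|_p\geq u$ for all distinct $s,t\in T$; the target is $\Ex\sup_{s,t\in T}(X_s-X_t)\geq(\kappa/2)u$. Since the process is centered, for any fixed $t_0\in T$,
\[
\Ex\sup_{s,t\in T}(X_s-X_t)\geq \Ex\sup_{s\in T}(X_s-X_{t_0})=\Ex\sup_{s\in T}X_s,
\]
so the hypothesis yields $\Ex\sup_{s,t\in T}(X_s-X_t)\geq\kappa\gamma_X(T)$, and it suffices to show $\gamma_X(T)\geq u/2$.

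Following the template of the proof of the preceding Remark, I would take an arbitrary admissible sequence $(\mathcal{A}_n)$ of $T$, choose the unique integer $k$ with $2^k\leq p<2^{k+1}$, and observe that $|\mathcal{A}_k|\leq 2^{2^k}\leq 2^p<e^p\leq|T|$, so some cell $A\in\mathcal{A}_k$ contains two distinct points $s,t\in T$. This gives $\sup_r\sum_n\Delta_{2^n}(A_n(r))\geq\Delta_{2^k}(A)\geq\|X_s-X_t\|_{2^k}$, reducing the goal to $\|X_s-X_t\|_{2^k}\geq u/2$.

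The main obstacle is precisely this last reduction: the Remark's proof used the moment regularity $\|X_t\|_{2p}\leq\gamma\|X_t\|_p$ to compare $\|X_s-X_t\|_{2^k}$ (with $2^k\leq p$) to $\|X_s-X_t\|_p\geq u$, and without that assumption the comparison fails in general. This is exactly where the i.i.d.\ hypothesis on the $X_i$ must enter. The approach I would pursue is to construct an auxiliary index set by placing several shifted copies of $T$ on disjoint coordinate blocks---which, by the i.i.d.\ property, makes the associated pieces of the canonical process independent---and then apply the given hypothesis to this enlarged set, using the scaling of expected suprema of independent sums to turn $L^p$-separation of the original set into a useful lower bound on the $L^{2^k}$-diameter. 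Keeping tight control of both $\gamma_X$ and $\Ex\sup$ for the auxiliary construction is the step I expect to require the most care.
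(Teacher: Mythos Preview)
Your approach is essentially the paper's. The auxiliary construction you describe is exactly what is done: take two copies of $T$ on interleaved coordinate blocks, i.e.\ set $\wt T=\{(t^1_1,t^2_1,t^1_2,t^2_2,\ldots):t^1,t^2\in T\}$. By the i.i.d.\ assumption, $X_{\wt t}=X'_{t^1}+X''_{t^2}$ with $X',X''$ independent copies of the process, so distinct points of $\wt T$ are still $u$-separated in $\|\cdot\|_p$, and
\[
\Ex\sup_{s,t\in\wt T}(X_s-X_t)=2\,\Ex\sup_{s,t\in T}(X_s-X_t).
\]
The one detail you have not pinned down is cleaner than your last paragraph suggests: you never need to compare $\|\cdot\|_{2^k}$ with $\|\cdot\|_p$ in the ``wrong'' direction. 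The whole point of doubling is that $|\wt T|=|T|^2\geq e^{2p}>2^{2^{k+1}}$, so in any admissible sequence of partitions of $\wt T$ some cell at level $k+1$ still contains two points; since $2^{k+1}>p$, monotonicity of moments gives $\Delta_{2^{k+1}}(A)\geq u$ for free. Hence $\gamma_X(\wt T)\geq u$, and applying the hypothesis to $\wt T$ (not to $T$, so your reduction ``it suffices to show $\gamma_X(T)\geq u/2$'' should be abandoned) yields $2\,\Ex\sup_{s,t\in T}(X_s-X_t)\geq\kappa u$.
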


Methods developed to prove Theorem \ref{thm:sud} also enable us to establish the following
comparison of weak and strong moments of the canonical processes based on variables with regular growth of moments.

\begin{thm}
\label{thm:weakstrong}
Let $X_t$ be as in Theorem \ref{thm:sud}. Then for any nonempty $T\subset \ell_2$ and $p\geq 1$,
\[
\left(\Ex\sup_{t\in T}|X_t|^p\right)^{1/p}\leq 
C(\alpha)\left( \Ex\sup_{t\in T}|X_t|+\sup_{t\in T}(\Ex|X_t|^p)^{1/p}\right).
\]
\end{thm}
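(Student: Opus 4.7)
The plan is to combine symmetrization, a one-step net construction based on Theorem \ref{thm:sud} applied at exponent $p$, and an iteration to handle the residual.

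\textbf{Reductions.} By monotone convergence we may assume $T$ is finite. Symmetrizing each $X_i$ by passing to $X_i-X_i'$ with $X_i'$ an independent copy reduces to the symmetric case at the cost of absolute constants, since for standardized $\alpha$-regular variables $\|X_i-X_i'\|_q$ is comparable to $\|X_i\|_q$ and the resulting process is comparable in every $L^q$. Write $M:=\Ex\sup_{t\in T}|X_t|$ and $\sigma:=\sup_{t\in T}\|X_t\|_p$; the target is $\|\sup_t|X_t|\|_p\leq C(\alpha)(M+\sigma)$.

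\textbf{Net construction via Sudakov.} Apply Theorem \ref{thm:sud} at exponent $p$: a maximal $d_p$-separated subset $T^*\subset T$ at scale $u:=4M/\kappa(\alpha)$ must satisfy $|T^*|<e^p$, since otherwise Sudakov would force $\Ex\sup_{s,t\in T}(X_s-X_t)\geq \kappa(\alpha)\,u=4M$, contradicting the elementary bound $\Ex\sup(X_s-X_t)\leq 2M$. By maximality each $t\in T$ admits an approximating point $\pi(t)\in T^*$ with $\|X_t-X_{\pi(t)}\|_p\leq u$, so that
\[
\sup_{t\in T}|X_t|\leq \max_{s\in T^*}|X_s|+\sup_{t\in T}|X_t-X_{\pi(t)}|.
\]
The $L^p$-norm of the first piece is at most $|T^*|^{1/p}\sigma\leq e\sigma$.

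\textbf{Chaining the residual.} The second piece is handled by iterating the same construction at geometrically increasing exponents $q_k:=2^kp$: apply Theorem \ref{thm:sud} at each $q_k$ to obtain nested approximations $\pi_k(t)\in T_k$ with $|T_k|\leq e^{q_k}$ and $\|X_t-X_{\pi_k(t)}\|_{q_k}\leq 4M/\kappa(\alpha)$, and then telescope
\[
X_t-X_{\pi(t)}=\sum_{k\geq 0}\bigl(X_{\pi_{k+1}(t)}-X_{\pi_k(t)}\bigr).
\]
At level $k$ there are at most $|T_k||T_{k+1}|\leq e^{3q_k}$ distinct increments $Z$, each satisfying $\|Z\|_{q_k}\leq 8M/\kappa(\alpha)$. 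Using the standard moment bound $\|\max_{j\leq N}|Z_j|\|_p\leq \|\max_j |Z_j|\|_{p\vee \log N}\leq e\max_j\|Z_j\|_{p\vee \log N}$, together with $\alpha$-regular growth inherited by canonical differences $X_t-X_s$ from the coordinates (so one can exchange $L^{q_k}$- and $L^{p\vee \log N}$-norms up to a factor depending only on $\alpha$), yields a per-level contribution of order $C(\alpha)M$ which, thanks to the geometric growth of $q_k$, telescopes to $C(\alpha)M$.

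\textbf{Main obstacle.} The delicate point is making the chaining sum summable rather than only uniformly bounded in $k$. This requires two ingredients: first, verifying that canonical linear combinations $X_t-X_s$ themselves obey $\|X_t-X_s\|_p\leq C(\alpha)(p/q)\|X_t-X_s\|_q$ for $p\geq q\geq 2$, which follows from the independent standardized structure and the $\alpha$-regularity of the coordinates; second, orchestrating the exponents $q_k$ so that at each scale the Sudakov bound improves sufficiently to absorb both the logarithm of the number of chain pairs and the moment exchange factors. This is precisely the point at which Theorem \ref{thm:sud} is used repeatedly, and it is what distinguishes the argument from a crude single-net estimate.
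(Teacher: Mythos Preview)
Your approach is genuinely different from the paper's, and the gap you yourself flag in the ``Main obstacle'' paragraph is real and unresolved.

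The paper does \emph{not} chain directly. It reuses the log-concave comparison built in the proof of Theorem~\ref{thm:sud}: one constructs symmetric variables $Y_i$ with log-concave tails such that $|X_i|\le |Y_i|\le L_\alpha|\wt X_i|$, applies the known weak--strong moment inequality for log-concave-tailed canonical processes (from \cite{La_SM}) to $(Y_t)$, and transfers back via the contraction principle and the already established bounds $\Ex\sup_t|Y_t|\le C(\alpha)\Ex\sup_t|X_t|$ and $\|Y_t\|_p\le C(\alpha)\|X_t\|_p$. No chaining or iterated Sudakov is needed.

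Your direct route breaks down at summability. At each level $k$ the Sudakov minoration at exponent $q_k=2^kp$ only guarantees a net of size $\le e^{q_k}$ at the \emph{fixed} scale $u=4M/\kappa(\alpha)$: the minoration says that a $d_{q_k}$-separated set at scale $v$ with $\ge e^{q_k}$ points forces $2M\ge \kappa(\alpha)v$, so one can never push $v$ below $2M/\kappa(\alpha)$, no matter how large $q_k$ is. Consequently every increment $Z^{(k)}$ satisfies $\|Z^{(k)}\|_{q_k}\le 8M/\kappa(\alpha)$ with the \emph{same} right-hand side for all $k$, and after the union bound plus Lemma~\ref{lem:convreg} you get $\|\max_t|Z^{(k)}_t|\|_p\le C(\alpha)M$ for every $k$. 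The sum over $k$ is then $C(\alpha)M$ times the number of levels, which for finite $T$ is of order $\log\log|T|$ and for infinite $T$ diverges. The phrase ``thanks to the geometric growth of $q_k$, telescopes to $C(\alpha)M$'' is exactly where the argument fails: the geometric growth of $q_k$ controls the \emph{number of increments} at each level, not their \emph{size}, and both ingredients are needed for a convergent chaining.

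To get a decreasing scale one would need something like $\|X_s-X_t\|_p\le c(p/q)^\theta\|X_s-X_t\|_q$ with $\theta>0$, i.e.\ that higher moments are \emph{strictly} larger; this is precisely the $\beta$-speed assumption of Theorem~\ref{thm:supcan}, which Theorem~\ref{thm:weakstrong} does \emph{not} make. The paper's log-concave transfer is what circumvents this: it replaces $(X_t)$ by a process for which the full inequality is already known, rather than trying to rebuild it from Sudakov alone.
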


This paper is organized as follows. In the next section we gather some general facts. In Section \ref{sec:sudakov}
we study the class ${\mathcal R}_{\alpha}$ and show that variables in this class have tails comparable to
variables with log-concave tails. Based on this observation we establish the Sudakov minoration principle 
(Theorem \ref{thm:sud}). We finish that section with  
 the proofs of Theorem \ref{thm:weakstrong} and Proposition \ref{prop:sud_yield_reg}. 
Section \ref{sec:lowerbounds} is devoted to reversing bound \eqref{eq:upper}. We obtain further regularity properties 
of the tails of variables from class ${\mathcal R}_{\alpha} \cap {\mathcal S}_{\beta}$ and then prove 
Theorem~\ref{thm:supcan} as well as 
Corollaries \ref{cor:comptails} and \ref{cor:convhull}. 
We close Section \ref{sec:lowerbounds} by  proving Proposition \ref{prop:suptosud}.

\subsection*{Notation} By $\ve_i$ we denote a Bernoulli sequence, i.e.\ a sequence of i.i.d.\ symmetric r.v.s taking values
$\pm 1$. We assume that variables $\ve_i$ are independent of other r.v.s. By a letter $C$ we denote universal
constants. Value of a constant $C$ may differ at each occurrence. Whenever we want to fix the value of an absolute
constant we use letters $C_1,C_2,\ldots$. We write $C(\alpha)$ (resp.\ $C(\alpha,\beta)$, etc.) for constants depending
only on parameters $\alpha$ (resp. $\alpha,\beta$ etc.).

\section{Preliminaries}\label{sec:prelim}

In this section we gather basic facts used in the sequel. We start with the contraction principle
for Bernoulli processes (see e.g. \cite[Theorem 4.4]{LT}).

\begin{thm}[Contraction principle]
\label{thm:contraction}
Let $(a_i)_{i=1}^n$, $(b_i)_{i=1}^n$ be two sequences of real numbers such that $|a_i| \leq |b_i|$, $i = 1, \ldots, n$.
Then
\begin{equation}
\label{eq:contraction1}
\Ex F\left(\left|\sum_{i=1}^n a_i\ve_i \right|\right) \leq \Ex F\left(\left|\sum_{i=1}^n b_i\ve_i \right|\right),
\end{equation}
where $F\colon\er_+\rightarrow\er_+$ is a convex function. In particular,
\begin{equation}
\label{eq:contraction2}
\left\|\sum_{i=1}^n a_i\ve_i\right\|_p \leq \left\|\sum_{i=1}^n b_i\ve_i\right\|_p.
\end{equation}
Moreover, for a nonempty subset $T$ of $\er^n$,
\begin{equation}
\label{eq:contraction3}
\Ex \sup_{t \in T} \sum_{i=1}^n t_ia_i\ve_i \leq \Ex \sup_{t \in T} \sum_{i=1}^n t_ib_i\ve_i.
\end{equation}
\end{thm}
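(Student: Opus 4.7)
The plan is to obtain all three statements from a single averaging trick. First I would assume without loss of generality that $b_i \neq 0$ for every $i$ (otherwise $a_i=0$ and the index contributes nothing) and write $a_i = \lambda_i b_i$ with $\lambda_i \in [-1,1]$. On an enlarged probability space introduce independent random signs $\eta_i \in \{-1,+1\}$, independent of the $\ve_j$'s, with $\Pr(\eta_i=+1) = (1+\lambda_i)/2$, so that $\Ex_\eta \eta_i = \lambda_i$. The single identity driving everything is
\[
\sum_{i=1}^n a_i \ve_i \;=\; \sum_{i=1}^n b_i \lambda_i \ve_i \;=\; \Ex_\eta \sum_{i=1}^n b_i \eta_i \ve_i.
\]

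For \eqref{eq:contraction1}, apply Jensen's inequality to the function $y\mapsto F(|y|)$ on $\er$, which is convex since $F$ is convex and (as is implicit in the standard formulation of this result) nondecreasing. This gives, pointwise in $\ve$,
\[
F\!\left(\left|\sum_i a_i \ve_i\right|\right) \;\leq\; \Ex_\eta F\!\left(\left|\sum_i b_i \eta_i \ve_i\right|\right).
\]
Taking $\Ex_\ve$ and swapping order of integration via Fubini, it remains to observe that for each fixed realization of $\eta$ the sequence $(\eta_i\ve_i)_i$ has the same joint law as $(\ve_i)_i$, because the $\ve_i$ are independent and symmetric; hence $\Ex_\ve F(|\sum_i b_i \eta_i \ve_i|) = \Ex_\ve F(|\sum_i b_i \ve_i|)$. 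Then \eqref{eq:contraction2} follows by specializing to $F(x)=x^p$ and taking $p$-th roots.

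For the supremum version \eqref{eq:contraction3} the same trick works, with Jensen moved through the sup: since $\eta \mapsto \sum_i t_i b_i \eta_i \ve_i$ is affine,
\[
\sup_{t\in T}\sum_i t_i a_i \ve_i \;=\; \sup_{t\in T}\Ex_\eta\sum_i t_i b_i \eta_i\ve_i \;\leq\; \Ex_\eta \sup_{t\in T}\sum_i t_i b_i \eta_i \ve_i,
\]
and taking $\Ex_\ve$ together with the distributional identity $(\eta_i\ve_i)_i \stackrel{d}{=} (\ve_i)_i$ concludes. The only real technical point, as opposed to a genuine obstacle, is ensuring that $y\mapsto F(|y|)$ is convex so that the Jensen step is legal — this is precisely why the standard hypothesis is that $F$ is convex and nondecreasing on $\er_+$. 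Everything else is a soft probabilistic manipulation, with no chaining, entropy, or geometric input required.
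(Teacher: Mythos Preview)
The paper does not prove this statement; it merely cites \cite[Theorem 4.4]{LT}. Your argument is exactly the standard proof found there (the auxiliary-sign averaging trick followed by Jensen and the distributional invariance $(\eta_i\ve_i)_i \stackrel{d}{=} (\ve_i)_i$), so there is nothing to compare.

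One remark: you are right to flag the monotonicity hypothesis. As the paper states it, $F$ is only assumed convex on $\er_+$, and with that alone \eqref{eq:contraction1} is false --- e.g.\ $n=2$, $a_1=a_2=0$, $b_1=b_2=1$, $F(x)=(x-2)^2$ gives $4$ on the left and $2$ on the right. The cited Ledoux--Talagrand theorem does include the nondecreasing assumption, so this is a minor omission in the paper's restatement, and your proof is correct under that intended hypothesis. The applications in the paper only use $F(x)=x^p$ and the linear sup, so nothing downstream is affected.
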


The next Lemma is a standard symmetrization argument (see e.g. \cite[Lemma 6.3]{LT}).

\begin{lem}[Symmetrization]
\label{lem:symm}
Let $X_i$ be independent standardized r.v.s and $(\ve_i)$ be a Bernoulli sequence independent
of $(X_i)$. Define two canonical processes $X_t=\sum_{i=1}^\infty t_iX_i$ and its symmetrized version
$\tilde{X}_t=\sum_{i=1}^\infty t_i\ve_iX_i$. Then
\[
\frac{1}{2}\|X_s-X_t\|_p\leq \|\tilde{X}_s-\tilde{X}_t\|_p\leq 2\|X_s-X_t\|_p \quad \mbox{ for }s,t\in \ell^2 
\]
and for any $T\subset \ell^2$,
\[
\frac{1}{2}\Ex\sup_{s,t\in T}(X_s-X_t)\leq \Ex\sup_{s,t\in T}(\tilde{X}_s-\tilde{X}_t)=
2\Ex\sup_{t\in T}\tilde{X}_t\leq 2\Ex\sup_{s,t\in T}(X_s-X_t).
\]
\end{lem}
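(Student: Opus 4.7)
The plan is to use the classical symmetrization trick of introducing independent copies $X_1',X_2',\ldots$ of the variables $X_i$, with $(X_i')$, $(X_i)$ and $(\ve_i)$ mutually independent. The key observation is that each $X_i-X_i'$ is symmetric, so the random sequence $(\ve_i(X_i-X_i'))_{i\geq 1}$ has the same distribution as $(X_i-X_i')_{i\geq 1}$; moreover this identity persists after conditioning on $(\ve_i)$, because conditionally the $\ve_i$'s are just constants of modulus one.

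For the moment bound, fix $s,t\in \ell^2$ and set $S:=X_s-X_t$, $\tilde S:=\tilde X_s-\tilde X_t$, and define $S'$, $\tilde S'$ analogously using $(X_i')$. The symmetry observation yields the distributional identity
\[
\tilde S-\tilde S'=\sum_i(s_i-t_i)\ve_i(X_i-X_i')\stackrel{d}{=}\sum_i(s_i-t_i)(X_i-X_i')=S-S'.
\]
Since $\Ex S'=\Ex\tilde S'=0$, conditional Jensen's inequality gives $\|S\|_p\leq \|S-S'\|_p$ and $\|\tilde S\|_p\leq \|\tilde S-\tilde S'\|_p$, while the triangle inequality gives the matching reverse bounds $\|S-S'\|_p\leq 2\|S\|_p$ and $\|\tilde S-\tilde S'\|_p\leq 2\|\tilde S\|_p$. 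Chaining these four estimates through the distributional identity above produces $\tfrac{1}{2}\|S\|_p\leq \|\tilde S\|_p\leq 2\|S\|_p$, as required.

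For the supremum bound, the same strategy applies to the full process. Conditional Jensen in the primed variables gives
\[
\Ex\sup_{s,t\in T}(X_s-X_t)\leq \Ex\sup_{s,t\in T}(X_s-X_t-X_s'+X_t'),
\]
and by the distributional identity (now applied at the level of processes) the right side equals $\Ex\sup_{s,t\in T}\sum_i(s_i-t_i)\ve_i(X_i-X_i')$; splitting the supremum of the difference into the sum of two suprema and using that the two resulting terms have identical laws (swap the roles of $X$ and $X'$, together with sign symmetry) bounds this by $2\Ex\sup_{s,t\in T}(\tilde X_s-\tilde X_t)$. The reverse direction is dual: condition first on $(\ve_i)$, apply conditional Jensen in the $X$-variables (using that each conditional process has mean zero), and then invoke the symmetry of $X_i-X_i'$ to drop the $\ve_i$ factors, obtaining a final triangle-inequality bound by $2\Ex\sup_{s,t\in T}(X_s-X_t)$. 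The middle equality $\Ex\sup_{s,t\in T}(\tilde X_s-\tilde X_t)=2\Ex\sup_{t\in T}\tilde X_t$ follows by writing $\sup_{s,t}(\tilde X_s-\tilde X_t)=\sup_s\tilde X_s+\sup_t(-\tilde X_t)$ and observing that $(-\tilde X_t)_{t\in T}\stackrel{d}{=}(\tilde X_t)_{t\in T}$, since flipping every sign $\ve_i\mapsto -\ve_i$ preserves the joint distribution.

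The argument is entirely routine once the symmetry trick is installed; the only real obstacle is the bookkeeping, namely tracking precisely which variables are being conditioned on at each step and verifying that the appropriate mean-zero property holds before each application of Jensen's inequality.
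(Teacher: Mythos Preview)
Your argument is correct and is precisely the standard symmetrization device the paper defers to (it gives no proof of its own, merely citing \cite[Lemma 6.3]{LT}). The introduction of independent copies $X_i'$, the distributional identity $\ve_i(X_i-X_i')\stackrel{d}{=}X_i-X_i'$, and the Jensen/triangle sandwich are exactly the ingredients of that classical lemma, so there is nothing to add.
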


Let us also recall the Paley-Zygmund inequality (cf. \cite[Lemma 0.2.1]{KW})
which goes back to work \cite{PZ} on trigonometric series.

\begin{lem}[Paley-Zygmund inequality]
For any nonnegative random variable $S$ and $\lambda\in (0,1)$,
\begin{equation}
\label{eq:PZ}
\Pr(S\geq \lambda \Ex S)\geq (1-\lambda)^2\frac{(\Ex S)^2}{\Ex S^2}.
\end{equation}
\end{lem}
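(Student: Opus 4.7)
The plan is to use the classical two-step proof that combines a truncation of $\Ex S$ with the Cauchy--Schwarz inequality. We may assume that $0<\Ex S$ and $\Ex S^2<\infty$, since otherwise the claimed inequality is either trivial (the right-hand side equals $0$) or the statement reduces to the trivial observation that a probability is nonnegative.

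First I would split the expectation according to whether $S$ lies above or below the threshold $\lambda \Ex S$, writing
\[
\Ex S = \Ex S\1_{\{S<\lambda\Ex S\}} + \Ex S\1_{\{S\geq\lambda\Ex S\}}.
\]
On the event $\{S<\lambda\Ex S\}$ we have $S<\lambda\Ex S$ pointwise, so the first summand on the right is at most $\lambda\Ex S$. Rearranging this and using $S\geq 0$ gives the lower bound
\[
\Ex S\1_{\{S\geq\lambda\Ex S\}} \geq (1-\lambda)\Ex S.
\]

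In the second step I would bound the same quantity from above using the Cauchy--Schwarz inequality, applied to $S$ and the indicator $\1_{\{S\geq\lambda\Ex S\}}$, which yields
\[
\Ex S\1_{\{S\geq\lambda\Ex S\}} \leq \bigl(\Ex S^2\bigr)^{1/2}\Pr(S\geq\lambda\Ex S)^{1/2}.
\]
Combining this with the previous display and squaring gives exactly \eqref{eq:PZ}. There is no real obstacle in this argument; the only genuinely analytic ingredient is the Cauchy--Schwarz step, while the first step is just an arithmetic rearrangement of the identity for $\Ex S$ split along the threshold.
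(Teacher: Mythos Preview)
Your argument is the standard and correct proof of the Paley--Zygmund inequality. Note that the paper does not actually prove this lemma; it merely recalls it with a reference to \cite[Lemma 0.2.1]{KW}, so there is no ``paper's own proof'' to compare against, but the argument you give is precisely the classical one found in that reference.
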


The next lemma shows that convolution preserves (up to a universal constant) the property of the $\alpha$-regular 
growth of moments. 

\begin{lem}
\label{lem:convreg}
Let $S=\sum_{i=1}^n X_i$, where $X_i$ are independent mean zero r.v.s with moments growing $\alpha$-regularly. Then
moments of $S$ grow $C\alpha$-regularly. In particular, if $(X_t)$ is a  canonical process based on
r.v.s from ${\cal R}_{\alpha}$, then $\|X_t\|_{4p}\leq C\alpha \|X_t\|_p$ for $p\geq 2$.
\end{lem}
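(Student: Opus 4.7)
The plan is to reduce to symmetric summands and then apply Lata\l{}a's sharp moment inequality for sums of independent random variables. By the symmetrization inequality (Lemma~\ref{lem:symm}) replacing $S$ by $\tilde S=\sum_{i=1}^n\ve_iX_i$ changes $\|S\|_p$ only by a factor of $2$, and the variables $\ve_iX_i$, being symmetric with $|\ve_iX_i|=|X_i|$, have exactly the same moments as $X_i$ and hence inherit the $\alpha$-regular growth. Thus it suffices to prove $\|\tilde S\|_p\le C\alpha(p/q)\|\tilde S\|_q$ for all $p\ge q\ge 2$.

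For this I would invoke Lata\l{}a's moment inequality for sums of independent symmetric random variables in its two-sided supremum form: for $r\ge 2$,
\[
\|\tilde S\|_r\;\asymp\;\sup_{s\in I(r)}\frac{r}{s}\Big(\sum_{i=1}^n\|X_i\|_s^s\Big)^{1/s},
\]
where the admissible range $I(r)\subset[2,r]$ always contains $s=r$, and any $s\in I(p)$ with $s\le q$ also lies in $I(q)$. The key computation is to bound each term appearing in the upper bound for $\|\tilde S\|_p$ by $C\alpha(p/q)\|\tilde S\|_q$.

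I would split into two cases according to the position of $s$ relative to $q$. If $s\le q$, writing $p/s=(p/q)(q/s)$ and applying Lata\l{}a's lower bound to $\|\tilde S\|_q$ at the same $s$ gives
\[
\frac{p}{s}\Big(\sum_i\|X_i\|_s^s\Big)^{1/s}
=\frac{p}{q}\cdot\frac{q}{s}\Big(\sum_i\|X_i\|_s^s\Big)^{1/s}
\le C\frac{p}{q}\|\tilde S\|_q.
\]
If $s>q$, first use $\alpha$-regularity, $\|X_i\|_s\le\alpha(s/q)\|X_i\|_q$, combined with $\ell^s\hookrightarrow\ell^q$ (valid for $s\ge q$), to obtain $(\sum_i\|X_i\|_s^s)^{1/s}\le\alpha(s/q)(\sum_i\|X_i\|_q^q)^{1/q}$; then Lata\l{}a's lower bound for $\|\tilde S\|_q$ at $s'=q\in I(q)$ yields $(\sum_i\|X_i\|_q^q)^{1/q}\le C\|\tilde S\|_q$. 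The factor $s/q$ cancels against $p/s$ and again produces $C\alpha(p/q)\|\tilde S\|_q$. Taking the supremum over $s\in I(p)$ and desymmetrizing via Lemma~\ref{lem:symm} yields the $C\alpha$-regularity of $S$.

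For the ``in particular'' clause, the first part applies to $S=X_t=\sum_it_iX_i$, whose summands $t_iX_i$ are independent, mean zero, and $\alpha$-regular (since $\|t_iX_i\|_p=|t_i|\|X_i\|_p$); taking $q=p$ and replacing $p$ by $4p$ gives $\|X_t\|_{4p}\le 4C\alpha\|X_t\|_p$. The main obstacle I expect is the careful handling of the admissible range $I(r)$ in Lata\l{}a's inequality, in particular ensuring that the special endpoint $s=2$ (corresponding to the ``Gaussian/variance'' contribution $(r/2)\|\tilde S\|_2$) can be used consistently in both the upper bound for $\|\tilde S\|_p$ and the lower bound for $\|\tilde S\|_q$.
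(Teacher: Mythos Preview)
Your overall plan—symmetrize via Lemma~\ref{lem:symm} and invoke Lata\l{}a's moment inequality from \cite{La_mom}—is exactly the paper's strategy. The gap is in the form of the inequality you cite. Lata\l{}a's two-sided bound for sums of independent symmetric random variables is
\[
\frac{e-1}{2e^2}|||(X_i)|||_r\le\|S\|_r\le e\,|||(X_i)|||_r,\qquad
|||(X_i)|||_r=\inf\Bigl\{u>0:\ \prod_i\Ex\Bigl|1+\frac{X_i}{u}\Bigr|^r\le e^r\Bigr\},
\]
and in general it does \emph{not} reduce to the supremum expression $\sup_{s\in I(r)}\frac{r}{s}\bigl(\sum_i\|X_i\|_s^s\bigr)^{1/s}$ that you use. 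Already for i.i.d.\ standard Gaussians your formula fails: with $n$ summands and $s=2$ it gives $(r/2)\sqrt n$, whereas $\|S\|_r\asymp\sqrt{nr}$. (Lata\l{}a's i.i.d.\ corollary has the factor $(n/p)^{1/s}\|X\|_s$, not $n^{1/s}\|X\|_s$, and no analogous closed ``sup over $s$'' formula is available in the non-i.i.d.\ symmetric case.) Since the two-sided estimate you rely on is false, the case split $s\le q$ versus $s>q$ cannot be carried out as written; this is precisely the ``obstacle'' you flag, and it is fatal rather than merely technical.

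The paper instead works directly with the norm $|||(X_i)|||_r$. After reducing to even integers $p\ge 2q$ and symmetric summands, it proves: if $Y$ is symmetric with $\alpha$-regular moments and $\Ex|1+Y|^q\le e^A\le e^q$, then $\Ex\bigl|1+\frac{q}{4e\alpha p}Y\bigr|^p\le e^{pA/q}$. Applied to each $X_i$ and multiplied, this gives $|||(X_i)|||_p\le 4e\alpha\,(p/q)\,|||(X_i)|||_q$. The proof of this claim expands both sides binomially in the even moments $\Ex|Y|^{2k}$; the terms with $2k\le q$ are controlled by the hypothesis, while the terms with $2k>q$ use $\|Y\|_{2k}\le\alpha\frac{2k}{q}\|Y\|_q$ together with $\|Y\|_q\le e$. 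So your instinct to split the moment scales at $q$ and feed in $\alpha$-regularity above that threshold is exactly right—but it must be executed inside the product defining $|||(X_i)|||_r$, not via a supremum formula that does not hold.
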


\begin{proof}
We are to show that $\|S\|_p\leq C\alpha\frac{p}{q}\|S\|_q$ for $p\geq q\geq 2$. 
By Lemma \ref{lem:symm} we may assume that the r.v.s $X_i$ are symmetric. Moreover, by monotonicity of moments, 
it is enough to consider only the case when $p$ and $q$ are even integers and $p\geq 2q$. In \cite{La_mom} it was shown
that for $r\geq 2$,
\[
\frac{e-1}{2e^2}|||(X_i)|||_r\leq \|S\|_r\leq e|||(X_i)|||_r,
\]
where
\[
|||(X_i)|||_r:=\inf\left\{u>0\colon\ \prod_{i}\Ex\left|1+\frac{X_i}{u}\right|^r\leq e^r\right\}.
\]
Therefore it is enough to proof the following claim.

\medskip

\noindent
{\bf Claim.} Suppose that $Y$ is a symmetric r.v.\ with moments growing $\alpha$-regularly. Let $p,q$ be positive even
integers such that $p\geq 2q$ and $\Ex|1+Y|^q\leq e^A\leq e^q$. Then $\Ex|1+\frac{q}{4e\alpha p}Y|^p\leq e^{pA/q}$.
\medskip

To show the claim first notice that
\[
\Ex|1+Y|^q=1+\sum_{k=1}^{q/2}\binom{q}{2k}\Ex|Y|^{2k}\geq 1+\sum_{k=1}^{q/2}\left(\frac{q}{2k}\right)^{2k}\Ex|Y|^{2k}
\geq 1+\Ex|Y|^q.
\] 
In particular, $\|Y\|_q\leq (e^{A}-1)^{1/q}\leq e$.
On the other hand,
\[
\Ex\left|1+\frac{q}{4e\alpha p}Y\right|^p=1+\sum_{k=1}^{p/2}\binom{p}{2k}\Ex\left|\frac{q}{4e\alpha p}Y\right|^{2k}\leq
1+\sum_{k=1}^{p/2}\left(\frac{q}{8\alpha k}\right)^{2k}\Ex|Y|^{2k}.
\]
Since $\alpha\geq 1$ we obviously have
\[
1+\sum_{k=1}^{q/2}\left(\frac{q}{8\alpha k}\right)^{2k}\Ex|Y|^{2k}\leq \Ex|1+Y|^q\leq e^A.
\]
The $\alpha$-regularity of moments of $Y$ yields
\[
\sum_{k=q/2+1}^{p/2}\left(\frac{q}{8\alpha k}\right)^{2k}\Ex|Y|^{2k}\leq
\sum_{k=q/2+1}^{p/2} \left(\frac{1}{4}\|Y\|_q\right)^{2k}\leq
\left(\frac{1}{4}\|Y\|_q\right)^{q}\sum_{l=1}^{\infty}\left(\frac{e}{4}\right)^{2l}\leq \|Y\|_q^q.
\]
Thus 
\[
\Ex\left|1+\frac{q}{4e\alpha p}Y\right|^p\leq e^A+\|Y\|_q^q\leq 2e^{A}-1\leq e^{2A}\leq e^{pA/q},
\]
which completes the proof of the claim and of the lemma. 
\end{proof}

We finish this section with the observation that will allow us to compare regular r.v.s with variables 
with log-concave tails.

\begin{lem}
\label{lm:convex}
Let a nondecreasing function $f\colon\er_+\rightarrow\er_+$ satisfy
\[
f(c\lambda t) \geq \lambda f(t), \qquad \mbox{for } \lambda \geq 1,\ t \geq t_0,
\]
where $t_0 \geq 0$, $c \geq 2$ are some constants. Then there is a function 
$g\colon\er_+\rightarrow\er_+$, convex on $[ct_0, \infty)$, such that
\[
g(t) \leq f(t) \leq g(c^2t), \qquad \mbox{for }t \geq ct_0,
\]
and $g(ct_0) = 0$.
\end{lem}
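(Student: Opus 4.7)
The plan is to construct $g$ as the pointwise supremum of a one‑parameter family of piecewise linear convex test functions. For each scale $t_* \geq ct_0$ I will exhibit a convex function $h_{t_*}$ sitting below $f$ on $[ct_0,\infty)$, vanishing at $ct_0$, and attaining the value $f(t_*)$ at $c^2 t_*$. Taking the supremum then automatically produces a convex function pinching $f$ in the requested sandwich.

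Explicitly, I would set
\[
h_{t_*}(s) := \max\!\left(0,\ \frac{f(t_*)}{c(c-1)\,t_*}\,(s - c t_*)\right),
\]
so that $h_{t_*}$ is affine on $[ct_*,\infty)$ and zero on $[0,ct_*]$; in particular $h_{t_*}(ct_0) = 0$, and a direct computation yields $h_{t_*}(c^2 t_*) = f(t_*)$. Each $h_{t_*}$ is convex as the maximum of two affine functions. I then define
\[
g(s) := \sup_{t_* \geq ct_0} h_{t_*}(s) \quad \text{for } s \geq ct_0,
\]
extended by $0$ on $[0,ct_0]$. Once the inequality $h_{t_*} \leq f$ is established (see below), $g$ is finite, is convex on $[ct_0,\infty)$ as a supremum of convex functions, takes values in $\er_+$, satisfies $g(ct_0) = 0$, and lies below $f$. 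Finally choosing $t_* = t$ gives $g(c^2 t) \geq h_t(c^2 t) = f(t)$ for every $t \geq ct_0$, as required.

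The one step that needs genuine work is verifying $h_{t_*}(s) \leq f(s)$ for all $s, t_* \geq ct_0$. The range $s \in [ct_0, ct_*]$ is trivial because $h_{t_*}$ vanishes there and $f \geq 0$. For $s > ct_*$, setting $\lambda = s/(ct_*) \geq 1$ and invoking the hypothesis $f(c\lambda t_*) \geq \lambda f(t_*)$ (legitimate since $t_* \geq ct_0 \geq t_0$) yields $f(s) \geq (s/(ct_*))\, f(t_*)$. Comparing with the explicit formula $h_{t_*}(s) = f(t_*)(s - ct_*)/(c(c-1)t_*)$, the desired inequality reduces to $s(c-1) \geq s - ct_*$, i.e.\ $s(c-2) \geq -c t_*$, which is automatic once $c \geq 2$. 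This is the sole place where both the growth hypothesis on $f$ and the numerical assumption $c \geq 2$ are used, and it is essentially the only delicate point in the argument.
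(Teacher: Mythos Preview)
Your argument is correct. The construction via a supremum of hinge functions works cleanly: each $h_{t_*}$ is convex, vanishes at $ct_0$, sits below $f$ by the growth hypothesis together with $c\geq 2$, and hits $f(t_*)$ at $c^2 t_*$; the supremum inherits all of these properties.

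This is, however, a different construction from the paper's. The paper defines
\[
g(t) = \int_{ct_0}^{t} \sup_{ct_0 \le y \le x} \frac{f(y/c)}{y}\,\dd x,
\]
obtaining convexity because the integrand is nondecreasing. The upper bound $g(t)\le f(t)$ comes from showing that the integrand is at most $f(t)/t$ (via the growth hypothesis), and the lower bound $g(c^2 t)\ge f(t)$ comes from estimating the integral over $[t,ct]$ and using $c-1\ge 1$. Your approach replaces this integral picture with an explicit family of affine supporting functions and takes their envelope. Both are standard ways to manufacture a convex minorant; yours is perhaps slightly more transparent in that the two required inequalities fall out of the very definition of $h_{t_*}$, while the paper's integral formula yields a smoother $g$ (absolutely continuous with nondecreasing derivative). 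Neither argument is deeper than the other, and both invoke $c\ge 2$ at exactly one point.
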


\begin{proof}
For $t \geq ct_0$ we set
\[
g(t) := \int_{ct_0}^t \sup_{ct_0 \leq y \leq x} \frac{f(y/c)}{y} \dd x.
\]
Then $g$ is convex on $[ct_0, \infty)$ as an integral of a nondecreasing function. For $t \geq x\geq ct_0$ we have 
$\sup_{ct_0 \leq y \leq x} f(y/c)/y \leq f(t)/t$, as $f(\lambda y)/(\lambda y) \geq f(y/c)/y$ for $y \geq ct_0$ and
$\lambda\geq 1$. Thus
\[
g(t) \leq (t-ct_0)\frac{f(t)}{t} \leq f(t), \qquad \mbox{for } t \geq ct_0.
\]
Moreover, for $t \geq ct_0$
\begin{align*}
g(ct) 
&= \int_{ct_0}^{ct} \sup_{ct_0 \leq y \leq x} \frac{f(y/c)}{y} \dd x 
\geq \int_t^{ct} \sup_{ct_0 \leq y \leq x} \frac{f(y/c)}{y} \dd x 
\\
&\geq (ct-t)\frac{f(t/c)}{t} = (c-1)f(t/c) \geq f(t/c),
\end{align*}
hence $g(c^2t) \geq f(t)$ for $t \geq ct_0$.
\end{proof}

\section{Sudakov minoration principle}\label{sec:sudakov}

The main goal of this section is to prove Theorem \ref{thm:sud}. The strategy of the proof is to reduce the problem involving 
random variables with moments growing regularly to the case of random variables with log-concave tails, for which the 
minoration is known 
(see \cite[Theorem 1]{La_sup}). The relevant result can be restated as follows

\begin{thm}
\label{thm:latala}
Let $X_t=\sum_{i=1}^\infty t_iX_i$, $t\in \ell^2$ be the canonical process based on independent symmetric r.v.s $X_i$
with log-concave tails. Then $(X_t)_{t\in \ell^2}$ satisfies the Sudakov minoration principle with 
a universal constant $\kappa_{\mathrm{lct}}>0$.
\end{thm}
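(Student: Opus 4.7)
The statement is explicitly a restatement of \cite[Theorem 1]{La_sup}, so the plan is to derive it as a direct consequence of that earlier result, which provides the matching bound
\[
\frac{1}{C}\gamma_X(T)\leq \Ex\sup_{s,t\in T}(X_s-X_t)\leq C\gamma_X(T)
\]
for canonical processes based on symmetric variables with log-concave tails. Given this equivalence, Sudakov minoration is a purely combinatorial consequence extracted by a pigeonhole argument on admissible partitions, essentially the same argument already used in the remark that precedes Proposition~\ref{prop:suptosud}.

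First, I would recall the regularity observation made between Definition~1 and Definition~2: a symmetric variable $Y$ with log-concave tail $\exp(-N(t))$ satisfies $\|Y\|_p\leq C\frac{p}{q}\|Y\|_q$ for $p\geq q\geq 2$, i.e.\ its moments grow $C$-regularly for some universal $C$. By Lemma~\ref{lem:convreg}, the increments $X_s-X_t$ then also have $C$-regularly growing moments. In particular, for $2^k\leq p\leq 2^{k+1}$ one gets $\|X_s-X_t\|_p\leq 2C\|X_s-X_t\|_{2^k}$.

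Second, I would fix $p\geq 1$ and $T\subset\ell^2$ with $|T|\geq e^p$ and $\|X_s-X_t\|_p\geq u$ for distinct $s,t\in T$, and let $k$ be the integer with $2^k\leq p<2^{k+1}$. Since $N_k=2^{2^k}\leq 2^p<e^p\leq|T|$, for every admissible sequence $(\mathcal{A}_n)$ some cell $A\in\mathcal{A}_k$ contains two distinct points of $T$. By the regularity of moments established in the previous step,
\[
\Delta_{2^k}(A)\geq \|X_s-X_t\|_{2^k}\geq \frac{1}{2C}\|X_s-X_t\|_p\geq \frac{u}{2C}.
\]
Taking any $t_0\in A$, the series $\sum_n\Delta_{2^n}(A_n(t_0))$ already has its $k$-th term at least $u/(2C)$, so $\sup_{t\in T}\sum_n\Delta_{2^n}(A_n(t))\geq u/(2C)$. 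Passing to the infimum over admissible sequences yields $\gamma_X(T)\geq u/(2C)$.

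Third, applying the lower bound $\Ex\sup_{s,t\in T}(X_s-X_t)\geq \gamma_X(T)/C$ from \cite[Theorem 1]{La_sup} finishes the argument with $\kappa_{\mathrm{lct}}=1/(2C^2)$. The main substantive content is of course already contained in \cite{La_sup}; the only genuine obstacle in that work is constructing the admissible sequence realizing the lower bound on $\gamma_X(T)$, which requires a delicate multi-scale chaining that exploits the doubling of the Young functions $N_i$. The deduction of Sudakov from the two-sided bound, which is all that is required here, is routine.
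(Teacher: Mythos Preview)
The paper does not prove this theorem; it simply cites it as \cite[Theorem~1]{La_sup}. Your attempt to derive it instead from the two-sided bound
\[
\frac{1}{C}\gamma_X(T)\leq \Ex\sup_{s,t\in T}(X_s-X_t)\leq C\gamma_X(T)
\]
has a genuine gap. That lower bound (stated in this paper as Theorem~\ref{thm:suplct} and attributed to \cite[Theorem~3]{La_sup}, not Theorem~1) carries the additional hypothesis $N_i(2t)\leq \gamma N_i(t)$, which is \emph{not} assumed in Theorem~\ref{thm:latala}. You even remark in your final paragraph that the construction ``exploits the doubling of the Young functions $N_i$'', yet do not notice that this doubling is an extra assumption absent from the statement you are proving. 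Without it the $\gamma_X$ lower bound can fail completely: the Bernoulli example in Remark~2 has $\gamma_X(T)=\infty$ while $\Ex\sup_{s,t\in T}(X_s-X_t)=2$, and Bernoulli variables have (degenerate) log-concave tails. More generally, any convex $N$ with $N(2t)/N(t)\to\infty$ (e.g.\ $N(t)=e^t-1$) falls outside Theorem~\ref{thm:suplct} but inside Theorem~\ref{thm:latala}.

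There is also a circularity/citation issue: according to this paper, \cite[Theorem~1]{La_sup} \emph{is} the Sudakov minoration for log-concave tails. The logical order in \cite{La_sup} is the reverse of what you propose: Sudakov minoration is established first, by a direct argument, and is then one of the inputs for the $\gamma_X$ lower bound under the extra doubling hypothesis. Your Steps~1--3 (the pigeonhole argument giving $\gamma_X(T)\geq u/(2C)$) are fine, but Step~4 invokes a result that is both stronger in hypotheses than what you have and logically downstream of what you are trying to prove.
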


\begin{rem}
Since we may normalize $X_i$ we  do not need to assume that they have variance one. It suffices to have
$\sup_{i}\Var(X_i)<\infty$ in order that $X_t$ is well defined for $t\in \ell^2$.
\end{rem}

The mentioned reduction hinges on the idea that the tail functions of random variables with regular growth of moments 
ought to be close to log-concave functions as, conversely, log-concave random variables are regular.

\begin{prop}
\label{prop:betweenconvex}
Let $\alpha \geq 1$. There exist constants $T_\alpha, L_\alpha$ such that for any $X \in \mathcal{R}_\alpha$ 
there is a nondecreasing function $M\colon [0,\infty)\rightarrow [0,\infty]$ which is convex, $M(T_\alpha) = 0$, 
and satisfies
\begin{equation}
\label{eq:betweenconvex}
M(t) \leq N(t) \leq M(L_\alpha t), \qquad \mbox{for }t \geq T_\alpha,
\end{equation}
where $N(t) = -\ln\Pr(|X| > t)$.
\end{prop}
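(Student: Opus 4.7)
The plan is to apply Lemma \ref{lm:convex} to $N(t) := -\ln\mathbb{P}(|X|>t)$, so the task reduces to verifying that $N$ satisfies the dilation-growth condition $N(c\lambda t) \geq \lambda N(t)$ for $\lambda \geq 1$ and $t \geq t_0$, with $c$ and $t_0$ depending only on $\alpha$. To pass from moments to tails, I introduce the quantile $t_p := \inf\{s : \mathbb{P}(|X|>s) \leq e^{-p}\}$ and first establish that $\|X\|_p$ and $t_p$ are comparable up to factors $C(\alpha)$ for $p$ beyond some threshold $p_\alpha$.

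Chebyshev immediately gives $\mathbb{P}(|X|>e\|X\|_p) \leq e^{-p}$, hence $t_p \leq e\|X\|_p$. For the converse, Paley-Zygmund \eqref{eq:PZ} applied to $S = |X|^p$ at level $1/2$ yields
\[
\mathbb{P}\bigl(|X| \geq 2^{-1/p}\|X\|_p\bigr) \geq \tfrac{1}{4}\bigl(\|X\|_p/\|X\|_{2p}\bigr)^{2p} \geq \tfrac{1}{4}(2\alpha)^{-2p}
\]
by $\|X\|_{2p} \leq 2\alpha\|X\|_p$, which reads $N(\|X\|_p/\sqrt{2}) \leq C_1(\alpha)\,p$. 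Invoking $\alpha$-regularity once more to replace $\|X\|_{p/C_1(\alpha)}$ by $\|X\|_p$ (losing an additional factor $\alpha C_1(\alpha)$) converts this into $\|X\|_p \leq C_2(\alpha)\,t_p$ for $p \geq p_\alpha$.

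Given $t$ with $p := N(t) \geq p_\alpha$ (so $t \geq t_p$) and $\lambda \geq 1$, combining Chebyshev at exponent $\lambda p$ with $\alpha$-regularity and $\|X\|_p \leq C_2(\alpha)\,t$ produces
\[
N\bigl(e\alpha C_2(\alpha)\,\lambda t\bigr) \geq N\bigl(e\|X\|_{\lambda p}\bigr) \geq \lambda p = \lambda N(t).
\]
So the hypothesis of Lemma \ref{lm:convex} holds with $c := \max\bigl(e\alpha C_2(\alpha),2\bigr)$ and any $t_0$ satisfying $N(t_0) \geq p_\alpha$. The lemma returns $g$, convex on $[ct_0,\infty)$, with $g(ct_0)=0$ and $g(t)\leq N(t)\leq g(c^2 t)$ for $t\geq ct_0$. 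Extending by $M \equiv 0$ on $[0,ct_0]$ and $M \equiv g$ on $[ct_0,\infty)$ yields a nondecreasing convex function on $[0,\infty)$ (the left slope $0$ at the join does not exceed the nonnegative right slope of $g$), and the choices $T_\alpha := ct_0$, $L_\alpha := c^2$ fulfil \eqref{eq:betweenconvex}.

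The genuine technical obstacle is the lower comparison $\|X\|_p \lesssim_\alpha t_p$: Chebyshev alone gives only the reverse direction, so one has to extract anti-concentration from Paley-Zygmund, and the $\alpha$-regularity between levels $p$ and $2p$ is essential to keep the denominator $\|X\|_{2p}^{2p}$ under control. Once this equivalence is in hand, the growth of $N$ and the final convex sandwich follow by routine bookkeeping.
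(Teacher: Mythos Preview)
Your proof follows essentially the same route as the paper's: establish the sublinearity $N(c\lambda t)\geq\lambda N(t)$ by combining Paley--Zygmund (to bound $N$ above in terms of moments) with Chebyshev plus $\alpha$-regularity (to bound $N$ below), then feed this into Lemma~\ref{lm:convex}. The paper parametrizes directly by the moment level, writing $t=(1-1/e)\|X\|_q$ and taking $p=\lambda q$, while you pass through the quantile $t_p$ and the comparison $\|X\|_p\asymp_\alpha t_p$; the substance is identical.

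One small point to tighten: your choice ``any $t_0$ satisfying $N(t_0)\geq p_\alpha$'' depends a priori on the law of $X$, whereas $T_\alpha=ct_0$ must depend only on $\alpha$. This is immediately fixed using that $X$ is standardized: Chebyshev gives $N(t)\geq 2\ln t$, so $t_0:=e^{p_\alpha/2}$ works uniformly over $\mathcal{R}_\alpha$, and with that the argument is complete.
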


\begin{proof}
Fix $\alpha \geq 1$. We begin with showing that there is a constant $\kappa_\alpha$ such that for any 
$X \in \mathcal{R}_\alpha$,
\begin{equation}
\label{eq:sublinear}
N(\kappa_\alpha \lambda t) \geq \lambda N(t), \qquad \lambda \geq 1,\ t \geq 1 - 1/e.
\end{equation}
When $\|X\|_\infty < \infty$ it is enough to prove this assertion for $t < (1-1/e)\|X\|_\infty$ as, providing that 
$\kappa_\alpha \geq (1-1/e)^{-1}$, for $t \geq (1-1/e)\|X\|_\infty$ we have 
$N(\kappa_\alpha \lambda t) \geq N\left( \n{X}_\infty \right) = \infty$.

So, fix $\lambda \geq 1$ and $1-1/e\leq t < (1-1/e)\|X\|_\infty$. There exists $q \geq 2$ such that $t = (1-1/e)\|X\|_q$. 
Pick also $p \geq q$ so that $\lambda = p/q$. By the Paley-Zygmund inequality \eqref{eq:PZ} and
by the assumption that $X \in \mathcal{R}_\alpha$ we obtain
\begin{align}
\notag
N(t) 
&= N\left( (1-1/e)\|X\|_q \right) \leq N\left( (1-1/e)^{1/q}\|X\|_q \right) 
\\
\notag
&= -\ln \Pr(|X|^q > (1-1/e)\Ex|X|^q) \leq -\ln \left( \frac{1}{e^2}\left( \frac{\|X\|_q}{\|X\|_{2q}} \right)^{2q} \right) 
\\
\label{eq:bc1}
&\leq 2 + q\ln\left[(2\alpha)^2\right] \leq q\ln\left(e(2\alpha)^2\right) =: qb_\alpha.
\end{align}
On the other hand, setting $\kappa_\alpha = e^{b_\alpha}(1-1/e)^{-1}\alpha$, with the aid of the assumption that 
$X \in \mathcal{R}_\alpha$ and Chebyshev's inequality, we get
\begin{align}
\notag
N(\kappa_\alpha \lambda t) 
&= N\left( e^{b_\alpha} \alpha\frac{p}{q}\|X\|_q \right) \geq N\left( e^{b_\alpha} \|X\|_p \right) 
\\
\label{eq:bc2}
&= -\ln \Pr(|X|^p > e^{pb_\alpha} \Ex|X|^p) \geq pb_\alpha = \lambda qb_\alpha.
\end{align}
Joining inequalities \eqref{eq:bc1} and \eqref{eq:bc2} we get \eqref{eq:sublinear} with $\kappa_\alpha = \frac{4e^2}{e-1}\alpha^3$.

By virtue of this \emph{sublinear} property \eqref{eq:sublinear}, Lemma \ref{lm:convex} applied to $f = N$, 
$c = \kappa_\alpha$, and $t_0 = 1-1/e$ finishes the proof, providing the constants
\[
L_\alpha = \kappa_\alpha^2 = \left(\frac{4e^2}{e-1}\alpha^3\right)^2, \qquad T_\alpha = \kappa_\alpha t_0 = 4e\alpha^3.
\]
\end{proof}

\begin{proof}[Proof of Theorem \ref{thm:sud}]
We fix $p\geq 2$, $T\subset \ell^2$ such that $|T|\geq e^p$ and $\|X_s-X_t\|_p\geq u$ for all distinct $s,t\in T$.
We are to show that $\Ex\sup_{s,t\in T}(X_s-X_t)\geq \kappa_{\alpha}u$
for a constant $\kappa_{\alpha}$ which depends only on $\alpha$. 
By Lemma \ref{lem:symm} we may assume that r.v.s $X_i$ are symmetric.

Proposition \ref{prop:betweenconvex} yields that the tail functions $N_i(t) := -\ln \Pr(|X_i| > t)$ 
of the variables $X_i$ are controlled by the convex functions $M_i(t)$, apart from $t \leq T_\alpha$, i.e. we have
$M_i(t) \leq N_i(t) \leq M_i(L_\alpha t)$ only for $t\geq T_{\alpha}$.
To gain control also for $t \leq T_{\alpha}$, define the symmetric random variables
\[
\wt{X}_i = (\sgn X_i)\max\{|X_i|,T_\alpha\},
\]
so that their tail functions $\wt{N}_i(t) = -\ln\Pr(|\wt{X}_i|>t)$,
\[
\wt{N_i}(t)=
\begin{cases}
0, & t < T_{\alpha} 
\\
N_i(t), & t \geq T_{\alpha}
\end{cases},
\]
satisfy
\begin{equation}
\label{eq:compofNandM}
M_i(t) \leq \wt{N}_i(t) \leq M_i(L_\alpha t) \qquad \mbox{for all }t \geq 0.
\end{equation}
This allows us to construct a sequence $Y_1, Y_2, \ldots$ of independent symmetric r.v.s with log-concave tails 
given by $\Pr(|Y_i| > t) = e^{-M_i(t)}$ such that
\begin{equation}
\label{eq:YXtildeequiv}
|Y_i| \geq |\wt{X}_i| \geq \frac{1}{L_\alpha}|Y_i|.
\end{equation}
Define the canonical processes $\wt{X}_t:=\sum_{i=1}^{\infty}t_i\wt{X}_i$ and $Y_t:=\sum_{i=1}^{\infty}t_iY_i$, $t\in \ell^2$. 

Since $|Y_i|\geq |X_i|$ and variables $Y_i$ and $X_i$ are symmetric we get for $s,t\in T$, $s\neq t$,
\[
\|Y_s-Y_t\|_p= \left\|\sum_{i=1}^\infty (s_i-t_i)|Y_i|\ve_i\right\|_p
\geq \left\|\sum_{i=1}^\infty (s_i-t_i)|X_i|\ve_i\right\|_p=\|X_s-X_t\|_p\geq u,
\]
where the first inequality follows by contraction principle \eqref{eq:contraction2} as $|Y_i| \geq |\wt{X}_i| \geq |X_i|$.
Hence we can apply Theorem \ref{thm:latala} to the canonical process $(Y_t)$ and 
obtain
\begin{equation}
\label{eq:sudY}
2\Ex\sup_{t\in T}Y_t=\Ex \sup_{s,t \in T}(Y_s-Y_t) \geq \kappa_{\mathrm{lct}}u.
\end{equation}

To finish the proof it suffices to show that $\E\sup_{t \in T} X_t$ majorizes $\E\sup_{t\in T} Y_t$.
Clearly,
\begin{equation}\label{eq:obvtriangle}
\Ex\sup_{t \in T} X_t \geq \Ex\sup_{t \in T} \wt{X}_t - \Ex\sup_{t \in T} (\wt{X}_t-X_t).
\end{equation}
Recall that by the definition of $\wt{X}_i$, 
$|\wt{X}_i - X_i| = |T_\alpha - X_i|\1_{\{|X_i|\leq T_\alpha\}} \leq T_\alpha$.
As a consequence, the supremum of the canonical process $\E\sup_{t \in T} (\wt{X}_t-X_t)$ is bounded by the supremum of 
the Bernoulli process $\E\sup_{t \in T} \sum t_iT_{\alpha}\ve_i$. Indeed, using the symmetry of the distribution of 
the variables $\wt{X}_i - X_i$ and contraction principle \eqref{eq:contraction3},
\[
\Ex\sup_{t \in T} (\wt{X}_t-X_t) = \Ex_X\Ex_\ve\sup_{t \in T} \sum_{i=1}^{\infty} t_i|\wt{X}_i-X_i|\ve_i 
\leq \Ex_\ve \sup_{t \in T} \sum_{i=1}^{\infty} t_iT_\alpha\ve_i.
\]

Since $X_i\in {\cal R}_{\alpha}$ we get by H\"older's inequality,
\begin{align*}
1 = \Ex X_i^2 = \Ex X_i^{4/3}X_i^{2/3} \leq \|X_i\|_4^{4/3}\|X_i\|_1^{2/3} \leq (2\alpha\|X_i\|_2)^{4/3}\|X_i\|_1^{2/3}=(2\alpha)^{4/3}(\Ex|X_i|)^{2/3}
\end{align*}
and thus $\Ex|X_i|\geq (2\alpha)^{-2}$. Hence by Jensen's inequality
\[
\Ex\sup_{t \in T} X_t = \Ex_\ve\Ex_X \sup_{t\in T} \sum_{i=1}^{\infty} t_i|X_i|\ve_i 
\geq \Ex_\ve \sup_{t \in T} \sum_{i=1}^{\infty} t_i\Ex_X|X_i|\ve_i \geq 
\frac{1}{(2\alpha)^2} \Ex \sup_{t \in T} \sum_{i=1}^{\infty} t_i\ve_i.
\]
As a result,
\[
\Ex\sup_{t \in T} (\wt{X}_t-X_t) \leq (2\alpha)^2 T_\alpha \E\sup_{t \in T} X_t,
\]
and by \eqref{eq:obvtriangle},
\begin{equation}
\label{eq:supXtildeX}
\Ex\sup_{t \in T} X_t \geq \frac{1}{1+(2\alpha)^2 T_\alpha} \Ex\sup_{t \in T} \wt{X}_t.
\end{equation}
Finally, notice that, by virtue of contraction principle \eqref{eq:contraction3}, the second inequality of 
\eqref{eq:YXtildeequiv} implies that
\begin{equation}
\label{eq:suptileXY}
\Ex\sup_{t \in T} \wt{X}_t \geq \frac{1}{L_\alpha} \Ex\sup_{t \in T} Y_t.
\end{equation}

Estimates \eqref{eq:sudY}, \eqref{eq:supXtildeX} and \eqref{eq:suptileXY} yield
\[
\Ex\sup_{s,t \in T}(X_s-X_t)=2\Ex\sup_{t \in T} X_t\geq \frac{2}{L_{\alpha}(1+(2\alpha)^2 T_\alpha)}\Ex\sup_{t \in T} Y_t
\geq \frac{\kappa_{\mathrm{lct}}}{L_{\alpha}(1+(2\alpha)^2 T_\alpha)}u.
\]
\end{proof}

\begin{proof}[Proof of Theorem \ref{thm:weakstrong}]
Using a symmetrization argument we may assume that the variables $X_i$ are symmetric. Let variables
$\wt{X}_i, Y_i$ and the related canonical processes be as in the proof of Theorem \ref{thm:sud}. 
Since the variables $Y_i$ have log-concave tails by \cite{La_SM} we get
\[
\left(\Ex\sup_{t\in T}|Y_t|^p\right)^{1/p}\leq 
C\left( \Ex\sup_{t\in T}|Y_t|+\sup_{t\in T}(\Ex|Y_t|^p)^{1/p}\right).
\]
Estimate $|Y_i|\geq |X_i|$ and the contraction principle yield
\[
\Ex\sup_{t\in T}|X_t|^p\leq \Ex\sup_{t\in T}|Y_t|^p.
\]
We showed above that 
\[
\Ex\sup_{t\in T}|Y_t|\leq L_\alpha(1+(2\alpha)^2T_\alpha)\Ex\sup_{t\in T}|X_t|.
\]
Finally the contration principle together with the bounds $|Y_i|\leq L_{\alpha}|\wt{X}_i|$, $|X_i-\wt{X}_i|\leq T_{\alpha}$
and $\Ex |X_i|\geq (2\alpha)^{-2}$ imply 
\[
\|Y_t\|_p\leq L_{\alpha}\|\wt{X}_t\|_p\leq L_{\alpha}\|X_t\|_p+L_{\alpha}T_{\alpha}\left\|\sum_{i=1}^{\infty}t_i\ve_i\right\|_p\leq
L_\alpha(1+T_{\alpha}(2\alpha)^2)\|X_t\|_p.
\]
\end{proof}

We conclude this section  with the proof of Proposition \ref{prop:sud_yield_reg} showing that in the i.i.d.\ case
the Sudakov minoration principle and the $\alpha$-regular growth of moments are  equivalent.

\begin{proof}[Proof of Proposition \ref{prop:sud_yield_reg}]
Let us fix $p\geq q\geq 2$ and for $1\leq m\leq n$ consider the following subset of $\ell^2$ 
\[
T=T(m,n) = \left\{t \in \ell^2\colon\ \sum_{i=1}^n t_i = m, \ t_i = 0, i > n\right\}.
\] 
Then $|T|=\binom{n}{m}\geq (n/m)^m\geq e^p$ if $n\geq me^{p/m}$. 
Moreover, for any $s, t \in T$, $s \neq t$, say with $s_j \neq t_j$ we have $\|X_s-X_t\|_p \geq \|X_j\|_p$.
Thus the Sudakov minoration principle yields for any $n\geq me^{p/m}$,
\begin{equation}
\label{eq:sud_yields_reg1}
\kappa\|X_i\|_p 
\leq \Ex\sup_{s,t \in T} (X_s-X_t) \leq 2\Ex\sup_{\substack{I \subset [n] \\ |I| = m}} \sum_{i \in I} |X_i| 
=2\Ex\sum_{k=1}^m X_k^*,
\end{equation}
where $(X_1^*,X_2^*,\ldots,X_n^*)$ is the nonincreasing rearrangement of $(|X_1|,|X_2|,\ldots,|X_n|)$.

We have
\begin{align*}
\Pr(X_k^*\geq t)=\Pr\left(\sum_{i=1}^n\1_{\{|X_i|\geq t\}}\geq k\right)\leq \frac{1}{k}\sum_{i=1}^n\Ex \1_{\{|X_i|\geq t\}}
=\frac{n}{k}\Pr(|X_i|\geq t)\leq \frac{n}{k}\frac{\|X_i\|_q^q}{t^q}.
\end{align*}
Integration by parts shows that 
\[
\Ex X_k^*\leq C\left(\frac{n}{k}\right)^{1/q}\|X_i\|_q.
\]
Combining this with \eqref{eq:sud_yields_reg1} we get (recall that $q\geq 2$ and constant $C$ may differ at each
occurrence)
\[
\kappa\|X_i\|_p\leq C\sum_{k=1}^m\left(\frac{n}{k}\right)^{1/q}\|X_i\|_q\leq Cn^{1/q}m^{1-1/q}\|X_i\|_q.
\]

Taking $m=\lceil p/q\rceil$ and $n=\lceil me^{p/m}\rceil$ we find that 
$n^{1/q}m^{1-1/q} \leq 4ep/q$. Hence
\[
\|X_i\|_p \leq \frac{C}{\kappa}\frac{p}{q}\|X_i\|_q
\]
which finishes the proof.
\end{proof}

\section{Lower bounds for suprema of canonical processes}
\label{sec:lowerbounds}

As in the case of the Sudakov minoration principle the proof of the lower bound in Theorem \ref{thm:supcan} is based
on the corresponding result for the canonical processes built on variables with log-concave tails.
Theorem 3 in \cite{La_sup} (see also Theorem 10.2.7 and Exercise 10.2.14  in \cite{Talnew}) implies the following result.

\begin{thm}
\label{thm:suplct}
Let $X_t=\sum_{i=1}^\infty t_iX_i$, $t\in \ell^2$ be the canonical process based on independent symmetric r.v.s $X_i$
with log-concave tails. Assume moreover that there exists $\gamma$ such that $N_i(2t)\leq \gamma N_i(t)$ for all 
$i$ and $t>0$, where $N_i(t)=-\ln\Pr(|X_i|> t)$. Then there exists a constant $C_{\mathrm{lct}}(\gamma)$, which 
depends only on $\gamma$ such that for any $T\subset \ell^2$,
\[
\Ex\sup_{s,t\in T}(X_s-X_t)=2\Ex\sup_{t\in T}X_t\geq \frac{1}{C_{\mathrm{lct}}(\gamma)}\gamma_X(T).
\]
\end{thm}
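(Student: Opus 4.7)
The plan is to deduce Theorem~\ref{thm:suplct} from Theorem~3 of \cite{La_sup} (equivalently Theorem~10.2.7 of \cite{Talnew}) by reconciling the two $\gamma$-functionals that appear. The cited result furnishes a lower bound of the form
\[
\E\sup_{s,t\in T}(X_s-X_t) \geq \frac{1}{C(\gamma)}\inf\sup_{t\in T}\sum_{n=0}^{\infty}\delta_{2^n}(A_n(t)),
\]
where $\delta_p(A)$ is an intrinsic $p$-diameter of $A$ defined directly in terms of the tail functions $N_i$ (essentially the smallest $u$ with $\Pr(|X_s-X_t|\geq u)\leq e^{-p}$ for all $s,t\in A$), and the infimum runs over admissible sequences of partitions. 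The task thus reduces to showing that $\delta_{2^n}(A)$ is comparable to $\Delta_{2^n}(A)=\sup_{s,t\in A}\|X_s-X_t\|_{2^n}$, with constants depending only on $\gamma$.

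First I would establish the quantile--moment equivalence for a single symmetric variable $Y$ with log-concave tail $N_Y$ satisfying $N_Y(2t)\leq\gamma N_Y(t)$: namely $\|Y\|_p \asymp N_Y^{-1}(p)$, with constants depending only on $\gamma$. The upper bound follows from Chebyshev's inequality applied at high moments, and the lower bound is a standard consequence of the Paley--Zygmund inequality \eqref{eq:PZ} together with the doubling of $N_Y$, which prevents the mass from concentrating in a short interval near the quantile.

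Second, I would extend this equivalence to the increments $X_s-X_t$ of the canonical process. The crucial point is to verify that the tail function $N_{s-t}$ of $X_s-X_t$ inherits a doubling constant depending only on $\gamma$; this can be derived by writing $X_s-X_t=\sum_i(s_i-t_i)X_i$ and combining the moment formulas for sums of independent symmetric variables with log-concave tails (as in \cite{La_mom}, already used in the proof of Lemma~\ref{lem:convreg}) with the doubling of the individual $N_i$. Once the equivalence $\|X_s-X_t\|_p\asymp N_{s-t}^{-1}(p)$ is in place, the comparison $\delta_{2^n}(A)\asymp\Delta_{2^n}(A)$ (up to constants depending only on $\gamma$) is immediate, and the theorem follows by direct substitution into the invoked lower bound.

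The chief obstacle I anticipate lies in the second step: transferring the doubling property from the coordinates $X_i$ to the convolved variable $X_s-X_t$ while carefully bookkeeping the dependence on $\gamma$ through the moment comparisons for sums. Once these tail estimates for increments are secured, the reduction to the cited result from \cite{La_sup} is essentially formal.
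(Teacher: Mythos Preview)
Your high-level plan coincides with the paper's: Theorem~\ref{thm:suplct} is not proved from scratch but is deduced from Theorem~3 of \cite{La_sup} (equivalently Theorem~10.2.7 of \cite{Talnew}) by translating the functional appearing there into the $L_p$-diameters $\Delta_{2^n}$ that define $\gamma_X$. The difference lies in how this translation is carried out.

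In the paper's argument (the remark following the statement), the cited result is recorded in Talagrand's formulation: there exist $r>2$, an admissible sequence $(\mathcal{A}_n)$, and integers $j_n(A)$ with $\varphi_{j_n(A)}(s,s')\leq 2^{n+1}$ for $s,s'\in A$, such that $\sup_t\sum_n 2^n r^{-j_n(A_n(t))}\leq C(\gamma)\,\E\sup_t X_t$. Here $\varphi_j$ is a \emph{coordinate-wise} Orlicz-type functional built from the $N_i$'s, not the tail quantile of the sum $X_s-X_t$. The paper then invokes the moment formulas of \cite{GK} and \cite{La_mom} once, and only in one direction: the condition $\varphi_{j_n(A)}(s,s')\leq 2^{n+1}$ already yields $\|X_s-X_{s'}\|_{2^n}\leq C\,2^n r^{-j_n(A)}$, hence $\Delta_{2^n}(A_n(t))\leq C\,2^n r^{-j_n(A_n(t))}$, and the desired bound on $\gamma_X(T)$ follows by summation.

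Your route instead identifies the intrinsic diameter with a tail quantile of the \emph{sum} $X_s-X_t$ and then seeks a two-sided quantile--moment equivalence for that sum, which forces you to show that the tail function $N_{s-t}$ inherits a doubling constant from the $N_i$'s. This can be carried out, but it is a detour: the ``chief obstacle'' you anticipate is entirely absent in the paper's approach, since only the one-sided upper bound on $\|X_s-X_{s'}\|_{2^n}$ in terms of the coordinate-wise functional is required, and that is precisely what \cite{GK}/\cite{La_mom} supply directly, with no need to analyse the tail of the convolved variable.
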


\begin{rem}
Theorem 3 in \cite{La_sup} and Theorem 10.2.7 in \cite{Talnew}  were formulated in a slightly different language. 
In particular, the latter states that there exist $r>2$, an admissible sequence of partitions $({\cal A}_{n})$
and numbers $j_n(A)$ for $A\in {\cal A}_n$ such that $\varphi_{j_n(A)}(s,s')\leq 2^{n+1}$ for all
$s,s'\in A$  and 
\[
\sup_{t\in T}\sum_{n=0}^{\infty}2^{n}r^{-j_n(A_n(t))}\leq C(\gamma)\Ex\sup_{t\in T} X_t.
\]
 (For the definition of $\varphi$ see \cite{Talnew} - it precedes the statement of Theorem 10.2.7.) However, the condition $\varphi_{j_n(A)}(s,s')\leq 2^{n+1}$ yields that $\|X_s-X_{s'}\|_{2^{n}}\leq C2^nr^{-j_n(A)}$
(see \cite{GK} for the i.i.d. case and Example 3 in \cite{La_mom} for the general situation),
so $\Delta_{2^{n}}(A_n(t))\leq C2^nr^{-j_n(A_n(t))}$ and
\[
\gamma_X(T)\leq C\sup_{t\in T}\sum_{n=0}^{\infty}2^{n}r^{-j_n(A_n(t))}\leq C_{\mathrm{lct}}(\gamma)\Ex\sup_{t\in T} X_t.
\]
\end{rem}

\begin{prop}
\label{prop:taillowgrowth}
Let $\alpha\geq 1, \beta > 1$. For any $r > 1$ there exists a constant $C(\alpha,\beta,r)$ such that 
for  $X \in \mathcal{R}_\alpha \cap \mathcal{S}_\beta$ we have
\begin{equation}
N(rt) \leq C(\alpha,\beta,r) N(t), \qquad t \geq 2,
\end{equation}
where $N(t) := -\ln \Pr(|X| > t)$.
\end{prop}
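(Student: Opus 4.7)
The plan is to convert the multiplicative inflation $t \mapsto rt$ of the argument of $N$ into a bounded inflation of the moment index, via two links between $N$ and the moment sequence $(\|X\|_p)_{p \geq 2}$. First, Chebyshev gives
\[
N(t) \geq p \quad \text{whenever } \|X\|_p \leq t/e, \ p \geq 2;
\]
second, repeating the Paley--Zygmund computation of inequality \eqref{eq:bc1} in the proof of Proposition~\ref{prop:betweenconvex} yields
\[
N(t) \leq b_\alpha\, p \quad \text{whenever } \|X\|_p \geq te/(e-1), \ p \geq 2,
\]
where $b_\alpha := \ln(e(2\alpha)^2)$. Since $X$ is standardized $\|X\|_2 = 1$, and iterating $\mathcal{S}_\beta$ gives $\|X\|_{2\beta^k} \geq 2^k$, so the continuous nondecreasing map $p \mapsto \|X\|_p$ ranges from $1$ to $\infty$ on $[2, \infty)$.

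For the main case $t \geq e$, I would set
\[
p_0 := \sup\{p \geq 2 : \|X\|_p \leq t/e\}, \qquad p_1 := \inf\{p \geq 2 : \|X\|_p \geq rte/(e-1)\},
\]
so that by continuity $\|X\|_{p_0} = t/e$, $\|X\|_{p_1} = rte/(e-1)$, and both are finite. The two displayed bounds give $N(t) \geq p_0$ and $N(rt) \leq b_\alpha\, p_1$, reducing everything to controlling the ratio $p_1/p_0$. Iterating the speed-$\beta$ hypothesis, $\|X\|_{\beta^k p_0} \geq 2^k \|X\|_{p_0} = 2^k t/e$ for every $k \geq 0$, so picking $k := \lceil \log_2(re^2/(e-1)) \rceil$ forces $\|X\|_{\beta^k p_0} \geq rte/(e-1)$, whence $p_1 \leq \beta^k p_0 \leq C(\beta, r)\, p_0$ and therefore $N(rt) \leq b_\alpha C(\beta, r) N(t)$.

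For the residual range $t \in [2, e)$, Chebyshev with $p = 2$ gives $N(t) \geq 2\ln t \geq 2\ln 2$, while $N(rt) \leq N(re)$ is bounded in terms of $\alpha, \beta, r$ alone by applying the Paley--Zygmund bound at a fixed index $p = 2\beta^k$ chosen, via $\mathcal{S}_\beta$, so that $\|X\|_p \geq re^2/(e-1)$. The only substantive step is the moment-index conversion in the middle paragraph, and it is precisely there that the $\mathcal{S}_\beta$ hypothesis is indispensable: without a uniform lower bound on how fast moments grow, a bounded ratio in $t$ need not translate into a bounded ratio in $p$. The role of $\mathcal{R}_\alpha$ is confined to the constant $b_\alpha$ in the Paley--Zygmund half.
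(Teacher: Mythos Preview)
Your proof is correct and follows essentially the same approach as the paper: both convert the tail function $N$ into moment indices via Chebyshev (lower bound on $N(t)$) and Paley--Zygmund (upper bound on $N(rt)$), and then invoke $\mathcal{S}_\beta$ to control the resulting ratio of indices. The only differences are cosmetic---the paper works with a single reference index $q$ defined by $\|X\|_{\beta q}=t$ rather than your pair $p_0,p_1$, and uses factors of $2$ where you use factors of $e$---so the arguments are interchangeable.
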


\begin{proof}
Fix $t \geq 2$ and define 
\[
q:=\inf\{p\geq 2\colon\ \|X\|_{\beta p}\geq t\}.
\]
Since $X \in \mathcal{R}_\alpha \cap \mathcal{S}_\beta$, the function $p\longmapsto \|X\|_p$ is finite
and continuous on $[2,\infty)$, moreover $\|X\|_2=1$ and $\|X\|_{\infty}=\infty$. Hence, if
$t\geq \|X\|_{2\beta}$, we have $t=\|X\|_{\beta q}$ and  by Chebyshev's inequality,
\[
N(t) = N( \|X\|_{\beta q}) \geq N(2 \|X\|_q) = -\ln \Pr(|X|^q > 2^q\E|X|^q) 
\geq q\ln 2.
\]
If $2\leq t<\|X\|_{2\beta}$, then $q=2$ and
\[
N(t)\geq N(2)= -\ln \Pr(|X|^2 > 4\Ex|X|^2)  \geq \ln 4=q\ln 2.
\]

Set an integer $k$ such that $r \leq 2^{k-2}$. Then, using consecutively the definition of $q$, the assumption that 
$X \in \mathcal{S}_\beta$, the Paley-Zygmund inequality, and the assumption that $X \in \mathcal{R}_\alpha$, 
we get the estimates
\begin{align}
\notag
N(rt) 
&\leq N\left( 2^{k-2}\|X\|_{\beta q} \right) \leq N\left(\frac{1}{2}\|X\|_{\beta^kq} \right) 
= -\ln \Pr\left( |X|^{\beta^kq} > 2^{-\beta^k q}\Ex|X|^{\beta^kq} \right) 
\\
\label{eq:tailgr}
&\leq -\ln \left( \frac{1}{4} \left( \frac{\|X\|_{\beta^kq}}{\|X\|_{2\beta^{k}q}} \right)^{2\beta^kq} \right)  
\leq \ln 4 + 2\beta^kq\ln (2\alpha)\leq q(\ln 2+2\beta^k\ln(2\alpha)).
\end{align}

Combining the above estimates we obtain the assertion with 
$C(\alpha,\beta,r) =(\ln 2+2\beta^k\ln(2\alpha))/\ln 2$ and 
 $k = k(r)$ being an integer such that $2^{k-2} \geq r$.
\end{proof}

\begin{remark}
\label{rem:Nbound}
Taking in \eqref{eq:tailgr} $t = 2$ which corresponds to $q = 2$ we find that 
\[
N(s) \leq 2(\ln 2+2\beta^k\ln(2\alpha)),\qquad \mbox{for }s<2^{k-1},
\]
which means that the tail distribution function of a variable $X \in \mathcal{R}_\alpha\cap \mathcal{S}_\beta$ 
at a certain value $s$ is bounded with a constant not depending on the distribution of $X$ but only on the 
parameters $\alpha, \beta$ and of course the value of $s$.
\end{remark}

\begin{proof}[Proof of Theorem \ref{thm:supcan}]
In view of \eqref{eq:upper} we are to address only the lower bound on $\Ex \sup_{t \in T} X_t$. A symmetrization argument
shows that we may assume that variables $X_i$ are symmetric.

Given symmetric $X_i$ let $Y_i$ be random variables defined as in the proof  of Theorem \ref{thm:sud}, 
i.e.\ $Y_i$'s are independent symmetric r.v.s having log-concave tails $\Pr(|Y_i| > t) = e^{-M_i(t)}$. 
Moreover, let $L_\alpha, T_\alpha$ be the constants as in Proposition \ref{prop:betweenconvex}.  
Due to Proposition \ref{prop:taillowgrowth} for $r = 2L_\alpha$ we know that the functions $N_i(t) := -\Pr(|X_i| > t)$ 
satisfy
\[
N_i(2L_\alpha t) \leq \gamma N(t), \qquad t \geq 2,
\]
where $\gamma=\gamma(\alpha,\beta):=C(\alpha,\beta,2L_{\alpha})$.

What then can be said about $M_i$? Using \eqref{eq:betweenconvex} we find that for 
$t \geq \tilde{T}_\alpha:=\max\{2,T_{\alpha}\}$
\[
M_i(2L_\alpha t) \leq N_i(2L_\alpha t) \leq \gamma N_i(t) \leq \gamma M_i(L_\alpha t),
\]
which means that $M_i$ are almost of moderate growth, namely for $t_{\alpha}:=L_\alpha\tilde{T}_\alpha$ we have 
\[
M_i(2t) \leq \gamma M_i(t), \qquad t \geq t_\alpha.
\]
Therefore, we improve the function $M_i$ putting on the interval $[0, t_\alpha]$ an artificial linear piece 
$t \mapsto \lambda(i,\alpha) t$, where $\lambda(i,\alpha):= M_i(t_\alpha)/t_\alpha$. In other words, take the numbers 
$p(i,\alpha) := \Pr(|Y_i| > t_\alpha) = e^{-M_i(t_\alpha)}$ and let $U_i$ be a sequence of independent random variables 
with the following symmetric \emph{truncated} exponential distribution,
\[
\Pr(|U_i| > t) = 
\begin{cases}
\frac{e^{-\lambda(i,\alpha) t}-p(i,\alpha)}{1-p(i,\alpha)}, & t \leq t_\alpha 
\\
0, & t > t_\alpha
\end{cases},
\]
which are in addition independent of the sequences $(X_i)$ and $(Y_i)$. Define
\[
Z_i := Y_i\1_{\{|Y_i| > t_\alpha\}} + U_i\1_{\{|Y_i| \leq t_\alpha\}}. 
\]
Let
\[
\wt{M}_i(t): = -\ln \Pr(|Z_i| > t) = 
\begin{cases}
\lambda(i,\alpha) t, & t \leq t_\alpha, 
\\
M_i(t), & t > t_\alpha.
\end{cases}
\]
Then $\wt{M}_i$ are convex functions of moderate growth, i.e.
\[
\wt{M}_i(2t) \leq \tilde{\gamma} \wt{M}_i(t), \qquad t \geq 0,
\]
where $\tilde{\gamma}=\tilde{\gamma}(\alpha,\beta):=\max\{2,\gamma\}$.

Thus Theorem \ref{thm:suplct} can be applied to the canonical process $Z_t:=\sum_{i}t_iZ_i$ and we get
\[
\Ex \sup_{t\in T}Z_t\geq \frac{1}{C_1(\alpha,\beta)}\gamma_Z(T),
\]
where $C_1(\alpha,\beta)=C_{\mathrm{lct}}(\tilde{\gamma})$.

What is left is to compare both the suprema and the functionals $\gamma$'s of the processes $(X_t)$ and $(Z_t)$. 
The former is easy, because we have $M_i(t) \leq \wt{M}_i(t)$, $t \geq 0$, which allows to take samples such  
that $|Y_i| \geq |Z_i|$, and consequently, thanks to contraction principle \eqref{eq:contraction3}, 
$\Ex\sup_{t \in T} Z_t \leq \Ex\sup_{t \in T} Y_t$. Joining this with estimates \eqref{eq:suptileXY} and 
\eqref{eq:supXtildeX} we derive
\[
\Ex \sup_{t \in T} Z_t \leq L_\alpha(1 + (2\alpha)^2 T_\alpha) \E \sup_{t \in T} X_t.
\]

For the latter, we would like to show $C(\alpha,\beta)\gamma_Z \geq \gamma_X$ . It is enough to compare the metrics,
 i.e.\ to prove that $C(\alpha,\beta)\|Z_s-Z_t\|_p \geq \|X_s-X_t\|_p$ for $p\geq 1$. We proceed as in the proof of 
 Theorem \ref{thm:sud}. We have
\begin{equation}
\label{eq:momentsZtriang}
\|Z_s-Z_t\|_p \geq \|Y_s-Y_t\|_p - \|(Y_s-Z_s)-(Y_t-Z_t)\|_p.
\end{equation}
In the proof of Theorem \ref{thm:sud} it was established that $\|Y_s-Y_t\|_p \geq \|X_s-X_t\|_p$. 
For the second term we use the symmetry of the variables $Y_i-Z_i$, contraction principle \eqref{eq:contraction2}, 
and the fact that $|Y_i - Z_i| \leq 2t_\alpha$, obtaining
\begin{equation}
\label{eq:momentsY-Z}
 \|(Y_s-Z_s)-(Y_t-Z_t)\|_p = \left\|\sum_{i}(s_i-t_i)|Y_i - Z_i|\ve_i\right\|_p 
 \leq 2t_\alpha\left\|\sum_i (s_i-t_i)\ve_i\right\|_p.
\end{equation}
Now we compare $\|Z_s-Z_t\|_p$ with moments of increments of the Bernoulli process. By Jensen's inequality we get
\begin{equation}
\label{eq:momentsZ}
\|Z_s-Z_t\|_p = \left\|\sum_i (s_i-t_i)|Z_i|\ve_i\right\|_p \geq 
\min_i \Ex|Z_i| \left\|\sum_i (s_i-t_i)\ve_i\right\|_p.
\end{equation}
Combining \eqref{eq:momentsZtriang}, \eqref{eq:momentsY-Z}, and \eqref{eq:momentsZ} yields
\[
\|Z_s-Z_t\|_p \geq \left( 1 + \frac{2t_\alpha}{\min_i\Ex |Z_i|} \right)^{-1}\|X_s-X_t\|_p.
\]
To finish it suffices to prove that $\Ex|Z_i| \geq c_{\alpha,\beta}$ for some positive constant $c_{\alpha,\beta}$, which
depends only on $\alpha$ and $\beta$. This is a cumbersome yet simple calculation. 
Recall the distributions of the variables $Y_i$ and $U_i$, 
the fact that they are independent, and observe that
\begin{align*}
\Ex|Z_i| 
&= \E|Y_i|\1_{\{|Y_i| > t_\alpha\}} + \Ex|U_i|\1_{\{|Y_i| \leq t_\alpha\}} 
\\ 
&\geq t_\alpha \Pr(|Y_i| > t_\alpha) + \left( \Ex|U_i| \right)\Pr(|Y_i| \leq t_\alpha)
\\
&= t_\alpha p(i,\alpha) + (1 - p(i,\alpha))\int_0^{t_\alpha} \frac{e^{-\lambda(i,\alpha) t}-p(i,\alpha)}{1-p(i,\alpha)} \dd t
\\
&=\frac{1}{\lambda(i,\alpha)}\left( 1 - e^{-\lambda(i,\alpha) t_\alpha} \right) = 
\frac{t_\alpha}{M_i(t_\alpha)}\left( 1 - e^{-M_i(t_\alpha)} \right).
\end{align*}
The last expression is nonincreasing with respect to $M_i(t_\alpha)$.
Since $M_i(t_\alpha) \leq N_i(t_\alpha)$ (see \eqref{eq:betweenconvex}), we are done provided that we can bound $N_i(t_\alpha)$ above.
Thus, Remark \ref{rem:Nbound} completes the proof.
\end{proof}

\begin{proof}[Proof of Corollary \ref{cor:comptails}]
Proposition 20 in \cite{La_sud} yields for $p\geq 1$,
\begin{align*}
\left(\Ex\sup_{t,s\in T}|Y_t-Y_s|^p\right)^{1/p}&\leq C(\gamma_Y(T)+\sup_{s,t\in T}\|Y_s-Y_t\|_p)\leq
C(\gamma_X(T)+\sup_{s,t\in T}\|X_s-X_t\|_p)
\\
& \leq C(\alpha,\beta) \left(\Ex\sup_{s,t\in T}|X_s-X_t|+\sup_{s,t\in T}\|X_s-X_t\|_p\right)
\\
&\leq
(C(\alpha,\beta)+1)\left\|\sup_{s,t\in T}|X_s-X_t|\right\|_{p},
\end{align*}
where the third inequality follows by Theorem \ref{thm:supcan}.
Hence by Chebyshev's inequality we obtain 
\begin{equation}
\label{eq:estY}
\Pr\left(\sup_{t,s\in T}|Y_t-Y_s|\geq C_1(\alpha,\beta)\left\|\sup_{s,t\in T}|X_s-X_t|\right\|_{p}\right)
\leq e^{-p}
\quad \mbox{ for }p\geq 1.
\end{equation}

Theorem \ref{thm:weakstrong} (used for the set $T-T$) and Lemma \ref{lem:convreg} yield for $p\geq q\geq 1$,
\[
\left\|\sup_{s,t\in T}|X_s-X_t|\right\|_{p}\leq C_2(\alpha)\frac{p}{q}\left\|\sup_{s,t\in T}|X_s-X_t|\right\|_{q}.
\]
Hence, by the Paley-Zygmund inequality we get for $q\geq 1$,
\[
\Pr\left(\sup_{t,s\in T}|X_t-X_s|\geq \frac{1}{2}\left\|\sup_{s,t\in T}|X_s-X_t|\right\|_{q}\right)
\geq \frac{1}{4}\left(\frac{1}{2C_2(\alpha)}\right)^q.
\]
Applying the above estimate with $q=p/\ln(2C_2(\alpha))$ we get
\begin{equation}
\label{eq:estX}
\Pr\left(\sup_{t,s\in T}|X_t-X_s|\geq \frac{1}{2C_2(\alpha)\ln(2C_2\alpha)}
\left\|\sup_{s,t\in T}|X_s-X_t|\right\|_{p}\right)
\geq \frac{1}{4}e^{-p} \quad \mbox{ for }p\geq \ln(2C_2(\alpha)).
\end{equation}

The assertion easily follows by \eqref{eq:estY} and \eqref{eq:estX}.

\end{proof}

\begin{proof}[Proof of Corollary \ref{cor:convhull}]
By Theorem \ref{thm:supcan} we may find an admissible sequence of partitions $({\cal A}_n)$ such that
\begin{equation}
\label{eq:part}
\sup_{t\in T}\sum_{n=0}^\infty \Delta_{2^n}(A_n(t))\leq C(\alpha,\beta)\Ex\sup_{s,t\in T}(X_s-X_t).
\end{equation}
For any $A\in {\cal A}_n$ let us choose a point $\pi_n(A)\in A$ and set $\pi_n(t):=\pi_n(A_n(t))$. 

Let $M_n:=\sum_{j=0}^{n} N_j$ for $n=0,1,\ldots$ (recall that we denote $N_j = 2^{2^j}$ for $j\geq 1$ and $N_0=1$). 
Then $\log(M_n+2)\leq 2^{n+1}$.
Notice that there are $|{\cal A}_n|\leq N_n$ points of the form $\pi_n(t)-\pi_{n-1}(t)$, $t\in T$.
So we may set $s^1:=0$ and for $n=1,2,\ldots$ define
$s^{k}$, $M_{n-1}<k\leq M_n$ as some rearrangement (with repetition if $|{\cal A}_n|<N_n$) 
of points of the form $(\pi_n(t)-\pi_{n-1}(t))/d_{2^{n+1}}(\pi_n(t),\pi_{n-1}(t))$, $t\in T$.
Then $\|X_{s^k}\|_{\log(k+2)}\leq 1$ for all $k$.

Observe that 
\[
\|t-\pi_n(t)\|_2=\|X_t-X_{\pi_n(t)}\|_{2}\leq \Delta_2(A_n(t))\leq \Delta_{2^n}(A_n(t))\rightarrow 0
\mbox{ for }n\rightarrow\infty.
\]
For any $s,t\in T$ we have $\pi_0(s)=\pi_0(t)$ and thus
\[
s-t=\lim_{n\rightarrow\infty}(\pi_n(s)-\pi_n(t))=
\lim_{n\rightarrow\infty}\left( \sum_{k=1}^n(\pi_k(s)-\pi_{k-1}(s))-\sum_{k=1}^n(\pi_k(t)-\pi_{k-1}(t))\right).
\]
This shows that
\[
T-T\subset R\ \overline{\mathrm{conv}}\{\pm s^k\colon\ k\geq 1\}, 
\]
where
\begin{align*}
R&:=2\sup_{t\in T}\sum_{n=1}^{\infty}d_{2^{n+1}}(\pi_n(t),\pi_{n-1}(t))
\leq 2\sup_{t\in T}\sum_{n=1}^{\infty}\Delta_{2^{n+1}}(A_{n-1}(t))
\\
&\leq  C(\alpha)\sup_{t\in T}\sum_{n=1}^{\infty}\Delta_{2^{n-1}}(A_{n-1}(t))
\leq C(\alpha,\beta)\Ex\sup_{s,t\in T}(X_s-X_t),
\end{align*}
where the second inequality follows by Lemma \ref{lem:convreg} and the last one by \eqref{eq:part}.
Thus it is enough to define $t^k:=Rs^k$, $k\geq 1$.
\end{proof}

\begin{proof}[Proof of Proposition \ref{prop:suptosud}]
Fix $p\geq 1$ and $T\subset \ell^2$ such that $|T|\geq e^p$ and $\|X_s-X_t\|_p\geq u$ for distinct points $s,t\in T$.
For $t^1,t^2\in T$ define a new point in $\ell^2$ by $t(t^1,t^2):=(t^1_1,t^2_1,t^1_2,t^2_2,\ldots)$.
Put also $\wt{T}:=\{t(t^1,t^2)\colon t^1,t^2\in T\}$. It is not hard to see that
$\|X_s-X_t\|_p\geq u$ for $t,s\in \wt{T}$, $t\neq s$. 

Choose an integer $k$ such that $2^k\leq  p< 2^{k+1}$ and let $({\cal A}_n)$ be an admissible sequence of partitions 
of the set $\wt{T}$. Since $|\wt{T}|=|T|^2\geq e^{2p}> 2^{2^{k+1}}$,
there is $A\in {\cal A}_k$ which contains at least two points of $\tilde{T}$.
Hence
\[
u\leq \Delta_{2^k}(A)\leq \gamma_X(\wt{T})\leq \frac{1}{\kappa} \Ex\sup_{s,t\in \wt{T}}(X_s-X_t)\leq 
\frac{2}{\kappa} \Ex\sup_{s,t\in T}(X_s-X_t).
\]
\end{proof}

\section*{Acknowledgements}

The second named author would like to thank Filip Borowiec for a fruitful discussion regarding Lemma \ref{lm:convex}.

\noindent
{\sc Rafa{\l} Lata{\l}a}\\
Institute of Mathematics\\
University of Warsaw\\
Banacha 2\\
02-097 Warszawa, Poland\\
\texttt{rlatala@mimuw.edu.pl}

\medskip
\noindent
{\sc Tomasz Tkocz}\\
Mathematics Institute\\
University of Warwick\\
Coventry CV4 7AL, UK\\
\texttt{t.tkocz@warwick.ac.uk}

\end{document}